\newmdenv{allfour}
\newmdenv[leftline=false,rightline=false]{topbot}
\newmdenv[topline=false,rightline=false]{leftbot}
\newmdenv[topline=false,leftline=false]{rightbot}
\newmdenv[topline=false,rightline=false,leftline=false]{bottom}
\newcommand{\tr}[1]{#1^{T}}
\newcommand{\cov}{\mathop{\text{Cov}}}
\newcommand{\e}{\mathop{\mathbb{E}}}
\newtheorem{theorem}{Theorem}[section]
\newtheorem{lemma}{Lemma}[section]
\newtheorem{definition}{Definition}[section]
\newtheorem{assumption}{Assumption}[section]
\newtheorem{example}{Example}[section]
\begin{document}

\setlength{\abovedisplayskip}{3pt}
\setlength{\belowdisplayskip}{3pt}
\setlength\parindent{0pt}

\title{Finite element approximation of non-Markovian random fields}
\author[1,2]{Mike PEREIRA}
\author[2]{Nicolas DESASSIS}
\affil[1]{{\small ESTIMAGES, Paris, France}}
\affil[2]{{\small Geostatistics team, MINES ParisTech - Geosciences, PSL Research University, Fontainebleau, France}}

\date{}
\maketitle

\begin{abstract}
In this paper, we present finite element approximations of a class of Generalized random fields defined over a bounded domain of $\mathbb{R}^d$ or a smooth $d$-dimensional Riemannian manifold ($d\ge 1$). An explicit expression for the covariance matrix of the weights of the finite element representation of these fields is provided and an analysis of the approximation error is carried out. Finally, a method to generate simulations of these weights while limiting computational and storage costs is presented.
\end{abstract}

\begin{tabular}{ll}
\textbf{Keywords} : & SPDE, Generalized random field, Riemannian manifold, Finite element method, \\
& Chebyshev approximation
\end{tabular}

\section{Introduction}
The "SPDE approach", as popularized by \citet{lindgren2011explicit}, consists in characterizing stationary continuous Markov random fields on $\mathbb{R}^d$ ($d\ge 1$) as solutions of stochastic partial differential equations (SPDE). This approach has two benefits:
\begin{itemize}
\item Discrete approximations of the solutions of these SPDEs, obtained using Galerkin methods such as the Finite Element method, are used in place of the original field in numerical computations. \citet{lindgren2011explicit} actually derived the expression of the precision matrix of the weights of the discrete representation of the solution, thus facilitating the use of this approach.
\item By tinkering with these same SPDEs, generalizations of stationary continuous Markov random fields on $\mathbb{R}^d$ can be defined on manifolds, and oscillating and even non-stationary random fields can be produced \citep{lindgren2011explicit, fuglstad2015exploring}.
\end{itemize}

This work aims at generalizing the SPDE approach to fields that are not continuously Markovian while still keeping the benefits mentioned above. First, the motivations for this work are detailed in order to point out the type of random fields that will be used throughout the developments. Then, an explicit formula for the covariance matrix of the weights of the finite element representation of such fields is provided, and an error analysis is carried out. Finally, an algorithm based on a Chebyshev polynomial approximation and allowing to compute simulations of these weights with a linear computational complexity is introduced.

\medskip
\textbf{Note} : This paper is a stub presenting the main results obtained by the authors. It is planned to be expanded.  

\section{Motivations}
\label{motiv}

Denote $\mathcal{W}$ the spatial Gaussian white noise on $\mathbb{R}^d$ defined on a complete probability space $(\Omega, \mathcal{A}, \mathbb{P})$. It can be seen as a Gaussian random measure satisfying:
\begin{equation*}
\forall A, B \in \mathcal{B}_B(\mathbb{R}^d), \quad \cov\left[\mathcal{W}(A), \mathcal{W}(B)\right]=\text{Leb}(A\cap B)
\end{equation*}
where $\mathcal{B}_B(\mathbb{R}^d)$ is the collection of bounded Borel sets of $\mathbb{R}^d$ and $\text{Leb}$ denotes the Lebesgue measure. 

\subsection{Solutions of stochastic partial differential equations}
\label{motiv:spde}
Let $g : \mathbb{R}_+ \rightarrow \mathbb{R}$ be a continuous, polynomially bounded function such that:
 $$\exists N>0, \quad \int_{\mathbb{R}^d} \vert g(\Vert \bm\omega\Vert^2)\vert^{-2}(1+\Vert \bm\omega\Vert^2)^{-N}d\bm\omega <\infty$$
And let $\mathcal{L}_g$ be the pseudo-differential operator defined over sufficiently regular functions of $\mathbb{R}^d$ by :
$$\mathcal{L}_g[.]=\mathcal{F}\left[\bm\omega \mapsto g(\Vert\bm\omega\Vert^2)\mathcal{F}[.](\bm\omega) \right]$$
Consider then the stochastic partial differential equation (SPDE) defined over $\mathbb{R}^d$ by \citep{vergara2018general}:
\begin{equation}
\mathcal{L}_gZ=\mathcal{W}
\label{spde_symbol}
\end{equation}
where $\mathcal{W}$ is a spatial Gaussian white noise and the equality is understood in the second order sense, i.e. both sides have same (generalized) covariance. The existence and uniqueness of a stationary solution of \eqref{spde_symbol} are guaranteed if $g$ is inferiorly bounded by the inverse of a strictly positive polynomial \citep{vergara2018general}.

Numerical solutions of \eqref{spde_symbol} on a triangulated domain $\mathcal{D}$ can be obtained using the Finite element method. A finite element approximation of the solution of \eqref{spde_symbol} is then built as:
$$Z(\bm x)=\sum\limits_{k=1}^nz_i\psi_i(\bm x), \quad \bm x\in\mathcal{D}$$
where $\lbrace \psi\rbrace_{1\le i\le n}$ is a family of deterministic basis functions and $\bm z=(z_1, \dots, z_n)^T$ is a vector of Gaussian weights. \citet{lindgren2011explicit} provided an expression for the precision matrix of these weights in the special case where $g$ is a real polynomial taking strictly positive values on $\mathbb{R}_+$, which corresponds to the case where $Z$ is a continuous Markov random field.

A first motivation for this work is to come up with numerical solutions of \eqref{spde_symbol} for a wider class of functions $g$, using the fact that within the framework presented above, the solution of \eqref{spde_symbol} is actually the Generalized random field defined by \citep{vergara2018general}:
$$Z=\mathcal{L}_{\frac{1}{g}}\mathcal{W}$$

\subsection{Generalized random fields}
\label{motiv:gen_rf}
Let $Z$ be an isotropic stationary real Gaussian random field on $\mathbb{R}^d$ with spectral density $f$. In particular, $f$ is a positive radial function of $\mathbb{R}^d$. \citet{lang2011fast} showed that then, $Z$ can be seen as a Generalized random field defined by:
$$Z=\mathcal{L}_{\sqrt{f}}\mathcal{W}=\mathcal{F}^{-1}\left[\bm\omega \mapsto \sqrt{f(\Vert\bm\omega\Vert^2)}\mathcal{F}[\mathcal{W}](\bm\omega)\right]$$
where $\mathcal{W}$ is once again a spatial Gaussian white noise. They used this characterization of Gaussian fields with spectral density $f$ to derive algorithms for the fast generation of samples of such fields over rectangular lattices of $\mathbb{R}^d$ using Fast Fourier transform.

A second motivation for this work is to combine this characterization of Gaussian fields with a given spectral density and the SPDE approach to come up with a way to generate samples of these fields on domains more complex  than lattices, namely irregular grids, general bounded domains of $\mathbb{R}^d$ and even Riemannian manifolds. This type of generalization was in particular exploited by \citet{lindgren2011explicit} in the particular case of continuous Markov random fields.

In the next section, the approximation of such Generalized random fields using the Finite element method is presented, and an error analysis on this approximation is carried out.

\section{Finite element approximation of Generalized random fields}

Le $d\in\mathbb{N}^*$ and let $\mathcal{D}$ be either a bounded (convex and polygonal) domain of $\mathbb{R}^d$, or a compact $d$-dimensional smooth Riemannian manifold. Denote $H=L^2(\mathcal{D})$, the separable Hilbert space of (real) square-integrable functions on $\mathcal{D}$. Denote $(., .)_H$ the inner product of $H$.

Let $L$ denote a densely defined, self-adjoint, positive semi-definite linear differential operator of second order, defined in a domain $\mathscr{D}(L)\subset H$ with Dirichlet boundary conditions on $\mathcal{D}$. $L$ is diagonalizable on a orthonormal basis $\lbrace e_j \rbrace_{j\in\mathbb{N}}$ of $H$. In particular, the eigenvalue-eigenfunction pairs of $L$, denoted $\lbrace (\lambda_j, e_j)\rbrace_{j\in\mathbb{N}}$, are arranged so that the eigenvalues $\lbrace \lambda_j \rbrace_{j\in\mathbb{N}}$ satisfy \citep{courant1966methods}:
$$0\le\lambda_1\le \lambda_2 \le \dots \le \lambda_j \le \dots, \quad \lim_{j\rightarrow +\infty}\lambda_j=+\infty$$

\subsection{Theoretical framework}

\subsubsection*{Differential operator on $H$}

For $\gamma : \mathbb{R}_+ \mapsto \mathbb{R}$, denote $H^\gamma=\left\lbrace\psi \in H : \sum_{j\in\mathbb{N}} \gamma(\lambda_j)^2(\psi_j, e_j)_H^2< \infty \right\rbrace$. Then we define the action of the differential operator $\gamma(L) : H^\gamma \rightarrow H$ on $H^\gamma$ by:
\begin{equation}
\gamma(L)\phi:=\sum\limits_{j\in\mathbb{N}}\gamma(\lambda_j)\left(\phi, e_j\right)_H e_j, \quad \phi\in H^\gamma
\end{equation} 
Remark that the subspace $H^\gamma$ is itself a Hilbert space with respect to the inner product $(., .)_\gamma$ and corresponding norm $\Vert . \Vert_\gamma$ defined by:
\begin{equation}
\begin{aligned}
(\phi,\psi)_\gamma =\left(\gamma(L)\phi, \gamma(L)\psi \right)_H=\sum_{j\in\mathbb{N}}\gamma(\lambda_j)^2(\phi, e_j)_H(\psi, e_j)_H 
\end{aligned}
\end{equation}

The following lemma gives a sufficient condition so that $H^\gamma=H$.
\begin{lemma} If $\gamma$ satisfies $\sum_{j\in\mathbb{N}}\gamma(\lambda_j)^2<\infty$ then $H^\gamma=H$.
\end{lemma}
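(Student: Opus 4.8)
The plan is to establish the two inclusions $H^\gamma \subseteq H$ and $H \subseteq H^\gamma$ separately. The first is immediate, since $H^\gamma$ is defined as a subset of $H$, so the entire content of the lemma resides in the reverse inclusion. To prove $H \subseteq H^\gamma$, I would fix an arbitrary $\psi \in H$ and show that the defining series $\sum_{j\in\mathbb{N}} \gamma(\lambda_j)^2 (\psi, e_j)_H^2$ converges, which is exactly the condition for $\psi$ to belong to $H^\gamma$.

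The key observation is that the Fourier coefficients of $\psi$ with respect to the orthonormal basis $\lbrace e_j \rbrace_{j\in\mathbb{N}}$ are uniformly bounded. Indeed, Parseval's identity gives $\sum_{j\in\mathbb{N}} (\psi, e_j)_H^2 = \Vert\psi\Vert_H^2 < \infty$, and since every term of a convergent series of nonnegative reals is dominated by its sum, one obtains $(\psi, e_j)_H^2 \le \Vert\psi\Vert_H^2$ for each index $j$. Substituting this bound into the series of interest and factoring out the (finite) squared norm yields
$$\sum_{j\in\mathbb{N}} \gamma(\lambda_j)^2 (\psi, e_j)_H^2 \le \Vert\psi\Vert_H^2 \sum_{j\in\mathbb{N}} \gamma(\lambda_j)^2,$$
whose right-hand side is finite precisely by the hypothesis $\sum_{j\in\mathbb{N}} \gamma(\lambda_j)^2 < \infty$. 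Hence $\psi \in H^\gamma$, and combining this with the trivial inclusion gives $H^\gamma = H$.

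As for difficulty, I do not expect any genuine obstacle here: the argument rests on a single uniform bound on the Fourier coefficients together with the summability hypothesis. The one point that warrants a moment of care is that the bound $(\psi, e_j)_H^2 \le \Vert\psi\Vert_H^2$ must hold simultaneously for every $j$, so that the common constant $\Vert\psi\Vert_H^2$ can be pulled out of the sum; this is guaranteed by the orthonormality of $\lbrace e_j \rbrace_{j\in\mathbb{N}}$, which is what makes Parseval's identity — and hence the uniform control of each coefficient by the same constant — available.
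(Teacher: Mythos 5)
Your proof is correct and follows essentially the same route as the paper: both reduce the claim to a comparison test for the series $\sum_{j}\gamma(\lambda_j)^2(\psi,e_j)_H^2$ using Parseval's identity and the summability of $\sum_j \gamma(\lambda_j)^2$. The only (cosmetic) difference is that you bound every coefficient uniformly by $\Vert\psi\Vert_H^2$, whereas the paper bounds the coefficients by $1$ for all sufficiently large $j$ (since they tend to zero) and compares only the tail; your uniform bound is, if anything, slightly cleaner.
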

\begin{proof}
$\forall \phi \in H$, $\sum_{j\in\mathbb{N}}(\phi, e_j)_H^2=\Vert\phi\Vert_H^2<\infty$, so in particular $\lim_{j\rightarrow +\infty} (\phi, e_j)_H^2 =0$. Therefore, $\exists J >0$ such that $j>J \Rightarrow (\phi, e_j)_H^2<1$, and so $\gamma(\lambda_j)^2(\phi, e_j)_H^2<\gamma(\lambda_j)^2$ which allows to conclude that the series $\sum_{j\in\mathbb{N}}\gamma(\lambda_j)^2(\phi, e_j)_H^2$ is convergent given that the series $\sum_{j\in\mathbb{N}}\gamma(\lambda_j)^2$ is convergent.
\end{proof}

In the particular case where $L=-\Delta$, where $\Delta$ denotes the Laplacian (or the Laplace-Beltrami operator) on $\mathcal{D}$, the operator $\gamma(L)$ satisfies the following property:

\begin{lemma} $\forall \phi\in H^\gamma$, $$\gamma(-\Delta)\phi=\mathcal{F}^{-1}\left[\bm w \mapsto \gamma(\Vert \bm w\Vert^2)\mathcal{F}[\phi](\bm\omega)\right]=\mathcal{L}_\gamma\phi$$
where $\mathcal{F}$ denotes the extension of the Fourier transform over $\mathcal{D}$.
\end{lemma}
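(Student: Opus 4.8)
The plan is to exploit the fact that the Fourier transform diagonalizes the Laplacian, so that the abstract functional calculus defining $\gamma(-\Delta)$ coincides with multiplication by $\gamma(\Vert\bm\omega\Vert^2)$ in the frequency domain, which is exactly the defining action of $\mathcal{L}_\gamma$. Concretely, the complex exponentials $\bm x\mapsto e^{i\bm\omega\cdot\bm x}$ play the role of (generalized) eigenfunctions of $-\Delta$ with eigenvalue $\Vert\bm\omega\Vert^2$, so applying $\gamma$ to the operator amounts to applying $\gamma$ to these eigenvalues before reconstructing via $\mathcal{F}^{-1}$.

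First I would establish the identity for monomials. For a sufficiently regular $\phi$, the differentiation property of the Fourier transform, $\mathcal{F}[\partial_{x_j}\phi](\bm\omega)=i\omega_j\mathcal{F}[\phi](\bm\omega)$, yields after iteration and summation over $j$ the standard relation
$$\mathcal{F}[(-\Delta)^k\phi](\bm\omega)=\Vert\bm\omega\Vert^{2k}\mathcal{F}[\phi](\bm\omega), \quad k\in\mathbb{N}.$$
By linearity this gives, for any polynomial $p$, the identity $\mathcal{F}[p(-\Delta)\phi](\bm\omega)=p(\Vert\bm\omega\Vert^2)\mathcal{F}[\phi](\bm\omega)$, i.e. $p(-\Delta)=\mathcal{L}_p$ on regular functions.

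Next I would extend the identity from polynomials to a general $\gamma$ by a density argument. The idea is to approximate $\gamma$ by polynomials $p_n$ converging to $\gamma$ on the spectrum of $-\Delta$, and to pass to the limit using, on one side, the continuity of the functional calculus introduced above — controlled through the norm $\Vert\gamma(L)\phi-p_n(L)\phi\Vert_H^2=\sum_{j}(\gamma(\lambda_j)-p_n(\lambda_j))^2(\phi,e_j)_H^2$ — and, on the other side, the fact that $\mathcal{F}$ is an isometry of $H$, so that $\mathcal{L}_{p_n}\phi\to\mathcal{L}_\gamma\phi$ as well. Identifying the two limits and applying $\mathcal{F}^{-1}$ to the resulting equality in the frequency domain then gives $\gamma(-\Delta)\phi=\mathcal{F}^{-1}[\bm\omega\mapsto\gamma(\Vert\bm\omega\Vert^2)\mathcal{F}[\phi](\bm\omega)]=\mathcal{L}_\gamma\phi$, the hypothesis $\phi\in H^\gamma$ guaranteeing that all the series and integrals involved converge.

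The main obstacle I anticipate is reconciling the discrete spectral definition of $\gamma(L)$, stated in terms of the orthonormal eigenbasis $\lbrace e_j\rbrace$, with the continuous multiplier form, since on $\mathbb{R}^d$ the Laplacian has purely continuous spectrum and admits no genuine $L^2$ eigenfunctions. The eigenfunction expansion must then be read as the Fourier inversion formula — the ``extension of the Fourier transform over $\mathcal{D}$'' referred to in the statement — and the delicate point is to justify rigorously that the polynomial approximation converges in the right norm and that the (infinite-dimensional) functional calculus commutes with the Fourier transform when passing to the limit.
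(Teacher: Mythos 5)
There is a genuine gap, and it sits exactly where you flagged it yourself: your argument never defines the object $\mathcal{F}$ appearing in the statement, yet producing that definition is the actual content of the lemma. The domain here is a bounded domain $\mathcal{D}$ (or a compact manifold) with Dirichlet boundary conditions, not $\mathbb{R}^d$, so the classical transform and its differentiation identity $\mathcal{F}[\partial_{x_j}\phi](\bm\omega)=i\omega_j\mathcal{F}[\phi](\bm\omega)$ are not available: integration by parts produces boundary terms, and the complex exponentials are not eigenfunctions of the Dirichlet Laplacian on $\mathcal{D}$. The paper's proof is constructive precisely on this point: for $\mathcal{D}=[0,\pi]^d$ it extends $\phi$ to an odd $2\pi$-periodic function $\tilde\phi$ on $\mathbb{R}^d$, defines $\mathcal{F}[\phi]$ as the impulse train $(2\pi)^d\sum_{\bm j\in\mathbb{Z}^d}c_{\bm j}(\tilde\phi)\,\delta_{\bm j}$, and proves that each coefficient $c_{\bm j}(\tilde\phi)$ equals, up to a constant, the eigenbasis coefficient $(\phi,e_{\vert\bm j\vert})_H$, the products of sines being exactly the Dirichlet eigenfunctions. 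Once this identification is made, the multiplier $\gamma(\Vert\bm\omega\Vert^2)$ acts only at the lattice points $\bm\omega=\bm j$, where $\Vert\bm j\Vert^2=\lambda_{\vert\bm j\vert}$, so the claimed identity reduces to a term-by-term comparison of coefficients of Dirac impulses; no limiting argument or functional-calculus machinery is needed. For general domains and manifolds the paper then simply \emph{defines} $\mathcal{F}$ as the map $\phi\mapsto\lbrace(\phi,e_j)_H\rbrace_{j\ge 1}\in\ell^2(\mathbb{N})$, after which the lemma is essentially tautological.

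Your density step also fails as stated, even granting a working definition of $\mathcal{F}$. The spectrum $\lbrace\lambda_j\rbrace$ is unbounded and in the regime of interest $\gamma$ decays (the paper assumes $\sum_j\gamma(\lambda_j)^2<\infty$), so no sequence of polynomials $p_n$ can approximate $\gamma$ uniformly on the spectrum: any non-constant polynomial diverges as $\lambda_j\to\infty$, and the error $\sum_j(\gamma(\lambda_j)-p_n(\lambda_j))^2(\phi,e_j)_H^2$ goes to zero only under a $\phi$-dependent dominated-convergence argument that you do not supply. Moreover, the asserted convergence $\mathcal{L}_{p_n}\phi\to\mathcal{L}_\gamma\phi$ presupposes that $\mathcal{L}_\gamma$ is already a well-defined operator on functions over $\mathcal{D}$ — which is the very thing being established. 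The remedy is to drop the approximation route entirely and argue directly on the discrete frequency side, as the paper does.
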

Details and proof of this lemma are provided in Appendix \ref{sec::four}. In particular, the motivational cases presented in Section \ref{motiv} correspond to the case where $\gamma=1/g$ for Section \ref{motiv:spde} and $\gamma=\sqrt{f}$ for Section \ref{motiv:gen_rf}.

\subsubsection*{Generalized random fields of $H$}

Denote $\mathcal{W}$ the spatial Gaussian white noise on $\mathcal{D}$ defined on a complete probability space $(\Omega, \mathcal{A}, \mathbb{P})$. A characterization of $\mathcal{W}$ based on the Hilbert space $H$ is given by the following lemma.

\begin{lemma}
Let $\lbrace \tilde{\xi}_j \rbrace_{j\in\mathbb{N}}$ be a sequence of independent, standard normally-distributed random variables. Then, the linear functional defined over $H$ by :
$$\phi \in H \mapsto \sum_{j\in\mathbb{N}} \tilde{\xi}_j(\phi, e_j)_H$$
is a Gaussian white noise (which is also denoted $\mathcal{W}$). In particular, it satisfies:
\begin{equation*}
\forall \phi, \psi \in H, \quad \cov\left[\mathcal{W}(\phi), \mathcal{W}(\psi)\right]=(\phi, \psi)_H
\label{cov_wn}
\end{equation*}
\label{lm::wn}
\end{lemma}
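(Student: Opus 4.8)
The plan is to verify directly the three defining features of a Gaussian white noise on $H$: that the functional is well defined and linear on $H$, that each of its values is a centered Gaussian random variable, and that these values obey the stated covariance identity. Throughout I write $S_N(\phi)=\sum_{j=1}^N \tilde\xi_j(\phi,e_j)_H$ for the partial sums defining $\mathcal{W}(\phi)$, and I work in $L^2(\Omega,\p)$. First I would establish well-definedness. Since $\lbrace e_j\rbrace_{j\in\mathbb{N}}$ is an orthonormal basis of $H$, Parseval's identity gives $\sum_{j\in\mathbb{N}}(\phi,e_j)_H^2=\Vert\phi\Vert_H^2<\infty$ for every $\phi\in H$. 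The variables $\tilde\xi_j(\phi,e_j)_H$ are independent, centered, with variance $(\phi,e_j)_H^2$, so for $M<N$,
$$\e\big[(S_N(\phi)-S_M(\phi))^2\big]=\sum_{j=M+1}^N(\phi,e_j)_H^2,$$
which tends to $0$ as $M,N\to\infty$. Hence $(S_N(\phi))_N$ is Cauchy in $L^2(\Omega,\p)$ and converges to a limit I call $\mathcal{W}(\phi)$, with $\e[\mathcal{W}(\phi)^2]=\Vert\phi\Vert_H^2$. Linearity of $\phi\mapsto\mathcal{W}(\phi)$ then follows from the linearity of each $\phi\mapsto S_N(\phi)$ together with the continuity of the $L^2$ limit.

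Next I would argue Gaussianity. Each $S_N(\phi)$ is a finite linear combination of independent standard Gaussians, hence a centered Gaussian variable with variance $\sum_{j=1}^N(\phi,e_j)_H^2$. Because the class of centered Gaussian random variables is closed under $L^2$ convergence, the limit $\mathcal{W}(\phi)$ is itself centered Gaussian, with variance $\Vert\phi\Vert_H^2$.

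For the covariance I would exploit the $L^2$ convergence just obtained. By the Cauchy--Schwarz inequality the product $S_N(\phi)S_N(\psi)$ converges to $\mathcal{W}(\phi)\mathcal{W}(\psi)$ in $L^1(\Omega,\p)$, so expectations pass to the limit. Using independence and $\e[\tilde\xi_j\tilde\xi_k]=\delta_{jk}$,
$$\e[S_N(\phi)S_N(\psi)]=\sum_{j,k=1}^N(\phi,e_j)_H(\psi,e_k)_H\,\e[\tilde\xi_j\tilde\xi_k]=\sum_{j=1}^N(\phi,e_j)_H(\psi,e_j)_H,$$
and the right-hand side converges to $(\phi,\psi)_H$ by the generalized Parseval identity. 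Since $\mathcal{W}(\phi)$ and $\mathcal{W}(\psi)$ are centered, this gives $\cov[\mathcal{W}(\phi),\mathcal{W}(\psi)]=(\phi,\psi)_H$.

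The main obstacle is less the algebra than the passage to the limit: everything hinges on the square-summability $\sum_j(\phi,e_j)_H^2=\Vert\phi\Vert_H^2$ furnished by Parseval, which simultaneously guarantees $L^2$ convergence of the series, the Gaussianity of the limit, and the legitimacy of interchanging the infinite sum with the expectation in the covariance computation. The one point I expect to require genuine care is justifying that an $L^2$ limit of centered Gaussians is again centered Gaussian, which I would settle by noting that the characteristic functions of $S_N(\phi)$ converge pointwise to that of $\mathcal{W}(\phi)$ and recognizing the limit as a Gaussian characteristic function.
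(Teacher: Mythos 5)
Your proof is correct, but it takes a genuinely different route from the paper's. The paper works with the infinite series directly: it fixes finitely many test functions $\phi_1,\dots,\phi_m$, computes the joint characteristic function of $(\mathcal{W}(\phi_1),\dots,\mathcal{W}(\phi_m))$ by factoring the expectation over the independent $\tilde\xi_j$, and recognizes the result as $\exp\left(-\tfrac12 w^TKw\right)$ with $K_{kl}=(\phi_k,\phi_l)_H$ via Parseval — thereby obtaining joint Gaussianity of all finite-dimensional marginals and the covariance identity in a single computation (and then identifying $\mathcal{W}$ with the spatial white noise through $A\mapsto\mathcal{W}(\bm 1_A)$). You instead build everything from $L^2(\Omega)$ convergence of the partial sums: Cauchy property from Parseval, one-dimensional Gaussianity of the limit via characteristic functions, and the covariance via $L^1$ convergence of products. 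Your route buys rigor at two points the paper glosses over — the well-definedness of the series $\sum_j\tilde\xi_j(\phi,e_j)_H$ (nowhere justified in the paper) and the interchange of expectation with an infinite product (which the paper performs without comment, and which really needs a limiting argument of exactly the kind you give). What the paper's route buys is joint Gaussianity of finite collections, which is part of what "Gaussian white noise" means and which you never explicitly establish: you only prove that each single $\mathcal{W}(\phi)$ is Gaussian. This is not a fatal gap, because it is closed in one line from what you already have — by your linearity, any linear combination $\sum_k a_k\mathcal{W}(\phi_k)=\mathcal{W}\left(\sum_k a_k\phi_k\right)$ is Gaussian by your marginal result, so every vector $(\mathcal{W}(\phi_1),\dots,\mathcal{W}(\phi_m))$ is a Gaussian vector — but you should state this step explicitly, since without it your three verified properties fall short of the definition of a Gaussian (jointly Gaussian) family.
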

\begin{proof}
Denote $\Xi : \phi \in H \mapsto \sum_{j\in\mathbb{N}} \tilde{\xi}_j(\phi, e_j)_H$. For an integer $m\ge 1$, take a set $\phi_1, \dots, \phi_m \in H$ of linearly independent elements of $H$ and denote $X=(\Xi(\phi_1), \dots, \Xi(\phi_m))^T$. Let's show that $X$ is a Gaussian vector. Indeed, the characteristic function of this vector is: 
\begin{align*}
\Phi(w)=\e\left[e^{i\tr{w}X}\right]
=\e\left[\exp\left({i\sum\limits_{k=1}^m w_k\Xi(\phi_k)}\right)\right]
=\e\left[\exp\left({i\sum_{j\in\mathbb{N}}\tilde{\xi}_j\sum\limits_{k=1}^m w_k  (\phi_k, e_j)_H}\right)\right]
\end{align*}
Using the fact that the $\xi_j$ are independent standard Gaussian variables yields:
\begin{align*}
\Phi(w)&=\prod\limits_{j\in\mathbb{N}}\e\left[\exp\left({i\tilde{\xi}_j\sum\limits_{k=1}^m w_k  (\phi_k, e_j)_H}\right)\right]
=\prod\limits_{j\in\mathbb{N}}\exp\left(-\frac{1}{2}\left(\sum\limits_{k=1}^m w_k  (\phi_k, e_j)_H\right)^2\right)\\
&=\exp\left(-\frac{1}{2}\sum_{j\in\mathbb{N}}\sum\limits_{k=1}^m\sum\limits_{l=1}^m w_k  (\phi_k, e_j)_H w_l(\phi_l, e_j)_H\right)
=\exp\left(-\frac{1}{2}\sum\limits_{k=1}^m\sum\limits_{l=1}^m w_kw_l  \sum_{j\in\mathbb{N}}(\phi_k, e_j)_H (\phi_l, e_j)_H\right)\\
&=\exp\left(-\frac{1}{2}\sum\limits_{k=1}^m\sum\limits_{l=1}^m w_kw_l (\phi_k, \phi_l)_H\right)=\exp\left(-\frac{1}{2}w^TKw\right)
\end{align*}
where $K$ is the positive definite matrix whose entries are $K_{kl}=(\phi_k, \phi_l)_H$, $1\le k,l\le m$. Therefore, we can conclude that $X$ is a Gaussian vector, and in particular, by definition of $K$, \eqref{EF_def} is satisfied by $\Xi$. \\
Hence, $\Xi$ can be identified to the spatial Gaussian white noise on $\mathcal{D}$ by defining for $ A \in \mathcal{B}(\mathcal{D})$ the measure $\Xi(A):=\Xi(\bm 1_A)$ where $\bm 1_A\in H$ is the indicator function of the set $A$.
\end{proof}

Denote $L^2(\Omega, H)$ the Hilbert space of $H$-valued random variables satisfying $\e[\Vert f\Vert^2_H]<\infty$ and equipped with the scalar product $(f, g)_{L^2(\Omega, H)}=\e\left[(f, g)_H\right]$. The next result introduces a class of Generalized random fields defined through the white noise that can be identified with elements of $L^2(\Omega, H)$.

\begin{definition}
Let $\gamma : \mathbb{R}_+ \mapsto \mathbb{R}$ such that :
\begin{equation}
\sum_{j\in\mathbb{N}} \gamma(\lambda_j)^2 <\infty
\end{equation}
Then, $\gamma(L)\mathcal{W}$ denotes the Generalized random field defined by:
\begin{equation}
(\gamma(L)\mathcal{W})[\phi] := \mathcal{W}(\gamma(L)[\phi]), \quad \phi\in H
\end{equation}
\end{definition}

\begin{lemma}
$\gamma(L)\mathcal{W}$ can be identified to an element $Z\in L^2(\Omega, H)$  defined by:
\begin{equation*}
Z=\sum\limits_{j\in\mathbb{N}}\tilde{\xi}_j\gamma(\lambda_j) e_j
\end{equation*}
for a sequence $\lbrace \tilde{\xi}_j \rbrace_{j\in\mathbb{N}}$ of independent standard normally-distributed random variables, through the linear functional of $H$ : $(\gamma(L)\mathcal{W})(\phi)=(Z, \phi)_H$, $\phi\in H$.
\end{lemma}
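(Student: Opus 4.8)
The plan is to establish two things: first, that the random series defining $Z$ converges in $L^2(\Omega, H)$, so that $Z$ is a genuine element of that space; and second, that the linear functional $\phi \mapsto (Z, \phi)_H$ coincides, as an element of $L^2(\Omega)$ (i.e.\ almost surely), with $(\gamma(L)\mathcal{W})(\phi) = \mathcal{W}(\gamma(L)\phi)$ for every $\phi \in H$.

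For the first point I would introduce the partial sums $Z_n = \sum_{j=1}^n \tilde{\xi}_j\gamma(\lambda_j) e_j$ and show they form a Cauchy sequence in $L^2(\Omega, H)$. Using the orthonormality of $\lbrace e_j\rbrace$ together with the independence of the Gaussian coefficients (so that $\e[\tilde{\xi}_j\tilde{\xi}_k]=\delta_{jk}$), one gets, for $n>m$, the identity $\e[\Vert Z_n - Z_m\Vert_H^2]=\sum_{j=m+1}^n\gamma(\lambda_j)^2$. Since the standing assumption $\sum_{j}\gamma(\lambda_j)^2<\infty$ makes these tail sums vanish, $(Z_n)$ is Cauchy, and by completeness of $L^2(\Omega, H)$ it converges to a limit $Z$; the same computation gives $\e[\Vert Z\Vert_H^2]=\sum_{j}\gamma(\lambda_j)^2<\infty$, confirming $Z\in L^2(\Omega, H)$.

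For the second point, note first that the summability assumption together with the earlier lemma guaranteeing $H^\gamma = H$ ensures that $\gamma(L)\phi$ is a well-defined element of $H$ for every $\phi\in H$, with coefficients $(\gamma(L)\phi, e_j)_H=\gamma(\lambda_j)(\phi, e_j)_H$. Applying the white-noise characterization of Lemma \ref{lm::wn} to this element yields $\mathcal{W}(\gamma(L)\phi)=\sum_{j}\tilde{\xi}_j\gamma(\lambda_j)(\phi, e_j)_H$, the series converging in $L^2(\Omega)$ because its variance $\sum_{j}\gamma(\lambda_j)^2(\phi, e_j)_H^2\le\Vert\phi\Vert_H^2\sum_{j}\gamma(\lambda_j)^2$ is finite. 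On the other side, $(Z_n, \phi)_H=\sum_{j=1}^n\tilde{\xi}_j\gamma(\lambda_j)(\phi, e_j)_H$, and the $L^2(\Omega, H)$ convergence $Z_n\to Z$ forces $(Z_n, \phi)_H\to(Z, \phi)_H$ in $L^2(\Omega)$, via the Cauchy--Schwarz bound $\e[\vert(Z_n-Z, \phi)_H\vert^2]\le\e[\Vert Z_n-Z\Vert_H^2]\,\Vert\phi\Vert_H^2\to 0$. Both $(Z,\phi)_H$ and $\mathcal{W}(\gamma(L)\phi)$ are thus the same $L^2(\Omega)$-limit of identical partial sums, which gives the claimed identity.

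The orthonormality and independence bookkeeping is routine; the point requiring genuine care, and where I expect the main obstacle to lie, is keeping the two modes of convergence straight. The object $Z$ is defined only as an $L^2(\Omega, H)$-limit, whereas the functional identity is an equality of scalar random variables, so one must pass from strong convergence of the $H$-valued sequence to $L^2(\Omega)$-convergence of the inner products tested against a fixed $\phi$. The Cauchy--Schwarz bound above is precisely what bridges this gap and legitimizes the term-by-term matching with $\mathcal{W}(\gamma(L)\phi)$.
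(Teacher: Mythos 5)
Your proof is correct and follows essentially the same route as the paper: establish $Z\in L^2(\Omega, H)$ from $\sum_j\gamma(\lambda_j)^2<\infty$, then apply the white-noise representation of Lemma \ref{lm::wn} to $\gamma(L)\phi$ and match the resulting series with $(Z,\phi)_H$. The only difference is that you make explicit the convergence bookkeeping (the Cauchy argument in $L^2(\Omega,H)$ and the Cauchy--Schwarz passage from $H$-valued convergence to scalar $L^2(\Omega)$ convergence) that the paper's terser proof leaves implicit.
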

\begin{proof}
Clearly, $Z$ is an element of $L^2(\Omega, H)$ given that $$\Vert Z\Vert_{L^2(\Omega, H)}^2 =\e\left[\Vert Z\Vert_{H}^2\right]  =  \sum_{j\in\mathbb{N}} \gamma(\lambda_j)^2 <\infty$$
Let's now show that the linear functional $\phi\in H \mapsto (Z, \phi)_H$ is equal to $\gamma(L)\mathcal{W}$. Indeed, $\forall \phi\in H$,
\begin{align*}
(\gamma(L)\mathcal{W})(\phi)=\mathcal{W}(\gamma(L)\phi)=\mathcal{W}\left(\sum\limits_{j\in\mathbb{N}}\gamma(\lambda_j)\left(\phi, e_j\right)_H e_j\right)
\end{align*}
So, using Lemma \ref{lm::wn}, 
\begin{align*}
(\gamma(L)\mathcal{W})(\phi)=\sum\limits_{j\in\mathbb{N}}\tilde{\xi}_j\gamma(\lambda_j)\left(\phi, e_j\right)_H=(Z, \phi)_H
\end{align*}
\end{proof}

From now on, Generalized random fields $Z$ of the form $\gamma(L)\mathcal{W}$ will be directly identified with their $L^2(\Omega, H)$ representation, and we will write:
\begin{equation}
Z=\gamma(L)\mathcal{W}=\sum\limits_{j\in\mathbb{N}}\tilde{\xi}_j\gamma(\lambda_j) e_j
\label{gen_field_def}
\end{equation}
where $\lbrace \tilde{\xi}_j \rbrace_{j\in\mathbb{N}}$ is a sequence of independent, standard normally-distributed random variables. In the next section, the simulation of such random fields through a finite element scheme is presented.

\subsection{Finite element approximation of a Generealized random field}
Let $\mathcal{T}_h$ denote a triangulation of $\mathcal{D}$ with mesh size $h$ and $\Psi=\lbrace \psi_k \rbrace_{1\le k \le n}$ a family of linearly independent basis functions associated with $\mathcal{T}_h$ such that $\Psi \subset H$. 

Denote $V_h \subset H$ the linear span of $\Psi$, which is a $n$-dimensional space. Denote $\lbrace f_{j,h} \rbrace_{1\le j \le  n}$ an orthonormal basis of $V_h$ with respect to the scalar product $(., .)_H$. The discretization of the operator $L$ on $V_h$ is denoted $L_h$ and is defined by:
\begin{equation}
\begin{aligned}
L_h : V_h \rightarrow V_h, \quad \psi \mapsto L_h\psi=\sum\limits_{j=1}^n \left(L \psi, f_{j,h}\right)_Hf_{j,h}
\end{aligned}
\end{equation}

Let $\bm C$ and $\bm G$ be the matrices defined by:
\begin{align*}
\bm C &= \left[( \psi_i, \psi_j)_H\right]_{1\le i,j\le n} \\
\bm G & =\left[(L\psi_i, \psi_j)_H\right]_{1\le i,j\le n}
\end{align*}
$\bm C$ is a symmetric positive definite matrix called Mass matrix and $\bm G$ is a symmetric positive semi-definite matrix called stiffness matrix. Denote $\bm C ^{1/2}$ the symmetric positive definite square root of $\bm C$, and $\bm C^{-1/2}$ its inverse.

\begin{lemma}
$L_h$ is diagonalizable on $V_h$ and its eigenvalues are those of the matrix $\bm S$ defined by:
$$\bm S = \bm C^{-1/2}\bm G\bm C^{-1/2}$$
In particular, the application:
$$E: \bm v\in\mathbb{R}^n \mapsto \sum\limits_{j=1}^n [\bm C^{-1/2}\bm v]_j\psi_j$$
is an isometric isomorphism that maps the eigenvectors of $\bm S$ to the eigenfunctions of $L_h$. 
\end{lemma}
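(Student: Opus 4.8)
The plan is to show that $E$ conjugates the matrix $\bm S$ into the operator $L_h$, i.e.\ that $L_h \circ E = E \circ \bm S$ as maps from $\mathbb{R}^n$ to $V_h$. Once this intertwining relation is established every assertion follows at once: since $E$ is a linear bijection, $L_h = E\bm S E^{-1}$, so $L_h$ and $\bm S$ are similar and share the same eigenvalues; and if $\bm S\bm v = \lambda\bm v$ then $L_h(E\bm v) = E(\bm S\bm v) = \lambda E\bm v$, so $E$ carries eigenvectors of $\bm S$ to eigenfunctions of $L_h$. Diagonalizability of $L_h$ then comes from the spectral theorem applied to the real symmetric matrix $\bm S = \bm C^{-1/2}\bm G\bm C^{-1/2}$.

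First I would check that $E$ is an isometric isomorphism. Writing $(E\bm v, E\bm w)_H = \sum_{i,j}[\bm C^{-1/2}\bm v]_i[\bm C^{-1/2}\bm w]_j(\psi_i,\psi_j)_H$ and recognizing $(\psi_i,\psi_j)_H = \bm C_{ij}$, this collapses to $(\bm C^{-1/2}\bm v)^T\bm C(\bm C^{-1/2}\bm w) = \bm v^T\bm w$ because $\bm C^{-1/2}\bm C\bm C^{-1/2} = \bm I$. Hence $E$ preserves inner products; since $\bm C^{-1/2}$ is invertible and $\{\psi_j\}$ is a basis of $V_h$, $E$ is onto $V_h$, so it is an isometric isomorphism from $\mathbb{R}^n$ (with the Euclidean product) onto $V_h$ (with $(.,.)_H$).

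Next I would compute the matrix of $L_h$ in the basis $\Psi$. The definition of $L_h$ through the orthonormal family $\{f_{j,h}\}$ means exactly that $L_h\phi = P_{V_h}(L\phi)$, the $H$-orthogonal projection of $L\phi$ onto $V_h$. For $\phi = \sum_i u_i\psi_i \in V_h$ with coordinate vector $\bm u$, testing $L_h\phi = \sum_k w_k\psi_k$ against each $\psi_l$ and using the projection identity $(P_{V_h}(L\phi),\psi_l)_H = (L\phi,\psi_l)_H$ (valid because $\psi_l\in V_h$) gives $[\bm C\bm w]_l = (L\phi,\psi_l)_H = \sum_i u_i(L\psi_i,\psi_l)_H = [\bm G\bm u]_l$, using the symmetry of $\bm G$. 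Hence $\bm w = \bm C^{-1}\bm G\bm u$, so $L_h$ is represented by $\bm C^{-1}\bm G$ in the basis $\Psi$.

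Finally I would assemble the intertwining relation. Since $E\bm v$ has coordinate vector $\bm C^{-1/2}\bm v$ in $\Psi$, the function $L_h(E\bm v)$ has coordinates $\bm C^{-1}\bm G\bm C^{-1/2}\bm v$, while $E(\bm S\bm v)$ has coordinates $\bm C^{-1/2}\bm S\bm v = \bm C^{-1/2}\bm C^{-1/2}\bm G\bm C^{-1/2}\bm v = \bm C^{-1}\bm G\bm C^{-1/2}\bm v$. The two coordinate vectors coincide, which is precisely $L_h\circ E = E\circ\bm S$, completing the argument. The one step demanding care is the computation of the matrix of $L_h$: the subtlety is that $L_h$ is defined through the abstract orthonormal basis $\{f_{j,h}\}$, and the work lies in recognizing it as an orthogonal projection so that testing against the $\psi_l$ converts it into the concrete matrices $\bm C$ and $\bm G$; everything else is linear algebra.
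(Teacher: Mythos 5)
Your proof is correct, but it is structured differently from the paper's. The paper argues eigenvector by eigenvector: starting from $\bm S\bm v=\lambda\bm v$ it derives the generalized eigenproblem $\bm G\bm u=\lambda\bm C\bm u$ with $\bm u=\bm C^{-1/2}\bm v$, tests against the $\psi_k$, and then transfers the resulting identities to the orthonormal family $\lbrace f_{j,h}\rbrace$ by inserting an invertible change-of-basis matrix $\bm A$ between $\Psi$ and $\lbrace f_{j,h}\rbrace$, concluding $L_hE(\bm v)=\lambda E(\bm v)$ directly from the definition of $L_h$; bijectivity of $E$ then comes, as in your argument, from the isometry plus rank--nullity. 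You instead prove the global intertwining identity $L_h\circ E=E\circ\bm S$ once and for all, by recognizing $L_h$ as $P_{V_h}\circ L$ and computing its matrix $\bm C^{-1}\bm G$ in the basis $\Psi$. Your route buys something the paper's proof leaves implicit: the similarity $L_h=E\bm S E^{-1}$ immediately gives that the spectra of $L_h$ and $\bm S$ coincide \emph{exactly} (both inclusions), and diagonalizability of $L_h$ follows transparently from the spectral theorem applied to the symmetric matrix $\bm S$. The paper's computation, taken literally, only shows that each eigenvalue of $\bm S$ is an eigenvalue of $L_h$; completeness of the spectrum and diagonalizability are then recovered because $E$ maps an orthonormal eigenbasis of $\bm S$ to an orthonormal family of $n$ eigenfunctions spanning the $n$-dimensional space $V_h$. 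The paper's approach, on the other hand, avoids identifying $L_h$ with a projection and stays entirely at the level of the defining sums, at the cost of the auxiliary matrix $\bm A$. Both proofs rest on the same ingredients (symmetry of $\bm C$ and $\bm G$, i.e.\ self-adjointness of $L$, and the isometry computation, which is identical in the two texts).
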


\begin{proof}
Take $\lambda$ an eigenvalue of $\bm S$ and denote $\bm v \neq \bm 0$ an associated eigenvector. Then, $\bm S \bm v =\bm C^{-1/2}\bm G\bm C^{-1/2}\bm v= \lambda \bm v$ and so, $\bm G \bm u = \lambda\bm C \bm u$ where $\bm u=\bm C^{-1/2}\bm v$. Hence, $\forall k \in[\![1, n]\!]$, $\sum_{j} \left(L \psi_k, \psi_j\right)_Hu_j=\lambda\sum_{j} \left(\psi_k, \psi_j\right)_Hu_j$, which, using the fact that $L$ is self-adjoint, gives 
\begin{equation}
\forall k \in[\![1, n]\!], \quad \left( L E(\bm v), \psi_k\right)_H= \lambda\left( E(\bm v), \psi_k\right)_H
\label{eq_psi}
\end{equation}
In particular, given that $\Psi$ is also a basis of $V_h$, we denote $\bm A=[a_{ij}]_{1\le i,j\le n}$ the invertible change-of-basis matrix between $\Psi$ and $\lbrace f_{j,h} \rbrace_{1\le j \le  n}$. 
Then \eqref{eq_psi} can be written,
$$\bm A \begin{pmatrix}
\left( L E(\bm v), f_{1,h}\right)_H \\ \vdots \\ \left( L E(\bm v), f_{n,h}\right)_H
\end{pmatrix}=\lambda\bm A \begin{pmatrix}\left(E(\bm v), f_{1,h}\right)_H \\ \vdots \\ \left( E(\bm v), f_{n,h}\right)_H
\end{pmatrix}$$
And therefore, $\forall j\in [\![1, n]\!]$, $\left( L E(\bm v), f_{j,h}\right)_H=\lambda\left(  E(\bm v), f_{j,h}\right)_H$. And so, given that $E(\bm v)\in V_h$, 
\begin{align*}
L_h E(\bm v) = \sum\limits_{j} \left(L E(\bm v), f_{j,h}\right)_H f_{j,h}=\sum\limits_{j} \lambda\left(E(\bm v), f_{j,h}\right)_H f_{j,h}=\lambda E(\bm v)
\end{align*}
Therefore $\lambda$ is an eigenvalue of $L_h$ and $E$ maps the eigenvectors of $\bm S$ to the eigenfunctions of $L_h$.

$E$ is an isometry: Indeed, $\forall \bm v\in\mathbb{R}^n$, $\Vert E(\bm v)\Vert_H^2=(E(\bm v), E(\bm v))_H=\sum_{i,j} [\bm C^{-1/2}\bm v]_i(\psi_i, \psi_j)_H [\bm C^{-1/2}\bm v]_j=\left(\bm C^{-1/2}\bm v\right)^T \bm C \bm C^{-1/2}\bm v=\bm v^T\bm v$. Consequently, given that it is also linear, $E$ is injective. And finally, using the rank–nullity theorem, $E$ is bijective (as an injective application between two vector spaces with same dimension).
\end{proof}

Let's denote $\lbrace \lambda_{j,h}\rbrace_{1\le j\le n} \subset \mathbb{R}_+$ the eigenvalues of $L_h$ (or equivalently $\bm S$) and  $\lbrace e_{j,h}\rbrace_{1\le j\le n}$ the associated orthonormal eigenfunctions, obtained by mapping by $E$ the orthonormal eigenbasis $\bm V =(\bm v_1| \dots | \bm v_n)$ of $\bm S$. In particular,
\begin{equation}
\bm S =\bm V \begin{pmatrix}
\lambda_{1,h} & & \\
 & \ddots & \\
 & & \lambda_{n,h}
\end{pmatrix}\bm V^T, \quad \text{ and } \quad  e_{j,h}=E(\bm v_j), 1\le j\le n
\end{equation}

Take $\gamma : \mathbb{R_+} \mapsto \mathbb{R}$. The discretization of the operator $\gamma(L)$ on $V_h$, denoted $\gamma(L_h)$, can now be defined as:
$$ \gamma(L_h) : V_h \rightarrow V_h, \quad \psi \mapsto \gamma(L_h)\psi := \sum\limits_{j=1}^n\gamma(\lambda_{j,h})\left(\psi, e_{j,h}\right)_H e_{j,h}$$

\begin{definition}
Let $\mathcal{W}_h$ be a $V_h$-valued random variable defined by :
$$\mathcal{W}_h=\sum\limits_{j=1}^n \tilde\xi_i e_{j,h}$$
where $\tilde{\bm \xi}=(\xi_1, \dots, \xi_n)^T$ is a vector whose entries are independent zero-mean (standard) normally-distributed random variables. Then, $\mathcal{W}_h$ is called white noise on $V_h$.
\end{definition}

\begin{lemma}
Let $\mathcal{W}_h$ be white noise in $V_h$. Then $\mathcal{W}_h$ can be written $\mathcal{W}_h=\sum_{j=1}^n \xi_i\psi_i$
where $\bm \xi=(\xi_1, \dots, \xi_n)^T \sim \mathcal{N}(\bm 0, \bm C^{-1})$.
\end{lemma}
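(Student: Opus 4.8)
The plan is to express the white noise $\mathcal{W}_h$ in the basis $\Psi=\{\psi_i\}_{1\le i\le n}$ by substituting the explicit form of the orthonormal eigenfunctions $e_{j,h}$ supplied by the isometry $E$, and then to identify the law of the resulting coefficient vector as the image of a standard Gaussian under a fixed linear map.

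First I would recall, from the preceding lemma, that each eigenfunction of $L_h$ is the image under $E$ of an eigenvector $\bm v_j$ of $\bm S$, so that $e_{j,h}=E(\bm v_j)=\sum_{k=1}^n [\bm C^{-1/2}\bm v_j]_k\,\psi_k$. Substituting this into the definition $\mathcal{W}_h=\sum_{j=1}^n \tilde\xi_j e_{j,h}$ and interchanging the two finite sums yields $\mathcal{W}_h=\sum_{k=1}^n \xi_k\psi_k$ with coefficients $\xi_k=\sum_{j=1}^n [\bm C^{-1/2}\bm v_j]_k\,\tilde\xi_j$. Writing $\bm V=(\bm v_1|\dots|\bm v_n)$ as in the diagonalization of $\bm S$, these coefficients read compactly as
\begin{equation*}
\bm\xi=\bm C^{-1/2}\bm V\,\tilde{\bm\xi}.
\end{equation*}

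It then remains to compute the law of $\bm\xi$. Since $\tilde{\bm\xi}\sim\mathcal{N}(\bm 0,\bm I_n)$ and $\bm\xi$ is a deterministic linear transformation of it, $\bm\xi$ is a centered Gaussian vector with covariance $(\bm C^{-1/2}\bm V)(\bm C^{-1/2}\bm V)^T=\bm C^{-1/2}\bm V\bm V^T\bm C^{-1/2}$. The key step is to invoke the orthonormality of the eigenbasis of $\bm S$: because $\bm S$ is symmetric, $\bm V$ is orthogonal, so $\bm V\bm V^T=\bm I_n$ and the covariance collapses to $\bm C^{-1/2}\bm C^{-1/2}=\bm C^{-1}$. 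This gives $\bm\xi\sim\mathcal{N}(\bm 0,\bm C^{-1})$, as claimed.

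I do not anticipate a genuine obstacle, as the statement is essentially a change-of-basis computation. The only points deserving care are the bookkeeping in the interchange of the summations and the explicit use of $\bm V\bm V^T=\bm I_n$ (rather than merely $\bm V^T\bm V=\bm I_n$), which is precisely what makes the two $\bm C^{-1/2}$ factors combine into $\bm C^{-1}$.
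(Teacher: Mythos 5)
Your proof is correct and follows essentially the same route as the paper: both derive the key relation $\bm\xi=\bm C^{-1/2}\bm V\tilde{\bm\xi}$ (you by expanding $e_{j,h}=E(\bm v_j)$ in the basis $\Psi$ and swapping sums, the paper by invoking the bijectivity of $E$) and then conclude from the orthogonality of $\bm V$. Your explicit covariance computation $(\bm C^{-1/2}\bm V)(\bm C^{-1/2}\bm V)^T=\bm C^{-1}$ is in fact a useful addition, since the paper leaves that final step implicit.
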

\begin{proof}
Using the linearity of $E$, $\mathcal{W}_h\in V_h$ can be written $\mathcal{W}_h=\sum_{j=1}^n \tilde \xi_i E(\bm v_i)=E\left(\sum_{j=1}^n \tilde \xi_i\bm v_i\right)=E\left( \bm V\tilde{\bm \xi}\right)$ where $\tilde{\bm \xi} \sim \mathcal{N}(\bm 0,\bm I)$. But also, in the basis $\Psi$, we get $\mathcal{W}_h=\sum_{j=1}^n  \xi_i \psi_i=E\left(\bm C^{1/2}\bm \xi\right)$. In particular, using the fact that $E$ is bijective, $\bm\xi=\bm C^{-1/2}\bm V\tilde{\bm\xi}$ which proves the result.
\end{proof}

\begin{definition}
The $V_h$-valued random variable $Z_h$ defined by:
\begin{equation}
Z_h=\gamma(L_h)\mathcal{W}_h
\label{EF_def}
\end{equation}
is called finite element approximation of the generalized random field $Z$ defined by \eqref{gen_field_def}.
\end{definition}

\begin{theorem}
The finite element approximation $Z_h$ defined by \eqref{EF_def}
can be decomposed as: $$Z_h=\sum_{j=1}^n z_i \psi_i$$
for some multi-normally distributed weights $\bm z = (z_1, \dots, z_n)^T$ with mean $0$ and covariance matrix:
\begin{equation}
\bm\Sigma_{\bm z}=\bm C^{-1/2}\gamma^2(\bm S)\bm C^{-1/2} \text{ where } \gamma^2(\bm S):=\bm V \left(\begin{smallmatrix}
\gamma(\lambda_{1,h})^2 & & \\
 & \ddots & \\
 & & \gamma(\lambda_{n,h})^2
\end{smallmatrix}\right)\bm V^T
\label{cov_mat_EF}
\end{equation}
\label{th:weights}
\end{theorem}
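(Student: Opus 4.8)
The plan is to compute $Z_h=\gamma(L_h)\mathcal{W}_h$ explicitly and then read off the distribution of its coordinate vector in the basis $\Psi$. The argument is essentially a chain of linear substitutions pushing the standard Gaussian vector $\tilde{\bm\xi}$ through the isometry $E$; the only structural ingredients are the preceding diagonalization lemma (which gives $\bm S=\bm V\diag(\lambda_{j,h})\bm V^T$ and $e_{j,h}=E(\bm v_j)$) and the definition of $\gamma(L_h)$.

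First I would write $\mathcal{W}_h=\sum_{k=1}^n\tilde\xi_k e_{k,h}$ and apply $\gamma(L_h)$ termwise. Because $\{e_{j,h}\}$ is orthonormal, the defining sum for $\gamma(L_h)e_{k,h}$ collapses to the single term $\gamma(\lambda_{k,h})e_{k,h}$, so the $e_{k,h}$ are the eigenfunctions of $\gamma(L_h)$ and $Z_h=\sum_{k=1}^n\tilde\xi_k\gamma(\lambda_{k,h})e_{k,h}$.

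Next I would transport this into $\mathbb{R}^n$. Substituting $e_{k,h}=E(\bm v_k)$ and using linearity of $E$ gives $Z_h=E\big(\sum_{k}\tilde\xi_k\gamma(\lambda_{k,h})\bm v_k\big)=E(\bm V\bm\Gamma\tilde{\bm\xi})$, where $\bm V=(\bm v_1|\cdots|\bm v_n)$ and $\bm\Gamma=\diag(\gamma(\lambda_{1,h}),\dots,\gamma(\lambda_{n,h}))$. Invoking the definition $E(\bm w)=\sum_j[\bm C^{-1/2}\bm w]_j\psi_j$, the coefficient of $\psi_j$ in $Z_h$ is $[\bm C^{-1/2}\bm V\bm\Gamma\tilde{\bm\xi}]_j$, so the weight vector is $\bm z=\bm C^{-1/2}\bm V\bm\Gamma\tilde{\bm\xi}$.

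Finally, since $\bm z$ is a fixed linear image of $\tilde{\bm\xi}\sim\mathcal{N}(\bm 0,\bm I)$, it is centred Gaussian with covariance $\bm\Sigma_{\bm z}=(\bm C^{-1/2}\bm V\bm\Gamma)(\bm C^{-1/2}\bm V\bm\Gamma)^T=\bm C^{-1/2}\bm V\bm\Gamma^2\bm V^T\bm C^{-1/2}$, using that $\bm C^{-1/2}$ is symmetric and $\bm\Gamma$ diagonal. Recognising $\bm V\bm\Gamma^2\bm V^T=\gamma^2(\bm S)$, which is exactly the stated definition since $\bm V$ is the orthonormal eigenbasis of $\bm S$ and $\bm\Gamma^2$ carries the squared eigenvalues, yields $\bm\Sigma_{\bm z}=\bm C^{-1/2}\gamma^2(\bm S)\bm C^{-1/2}$. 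I expect no genuine obstacle here; the only place demanding care is keeping the $\bm C^{\pm1/2}$ factors straight as they pass through $E$ and ordering $\bm\Gamma$ consistently with the columns of $\bm V$.
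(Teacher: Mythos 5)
Your proposal is correct and follows essentially the same route as the paper's own proof: express $Z_h=\gamma(L_h)\mathcal{W}_h$ in the eigenbasis $e_{j,h}=E(\bm v_j)$, pull the result back through the isometry $E$ to get $\bm z=\bm C^{-1/2}\bm V\,\mathrm{diag}(\gamma(\lambda_{j,h}))\,\tilde{\bm\xi}$, and read off the Gaussian covariance. If anything, you spell out two steps the paper leaves implicit (the collapse $\gamma(L_h)e_{k,h}=\gamma(\lambda_{k,h})e_{k,h}$ via orthonormality, and the final covariance computation), which is fine.
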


\begin{proof}
Notice that $Z_h=\sum_{j=1}^n z_i \psi_i=E\left(\bm C^{1/2}\bm z\right)$. But also, 
\begin{equation*}
Z_h=\gamma(L_h)\mathcal{W}_h=\sum_{j=1}^n \gamma(\lambda_i)\tilde \xi_i E(\bm v_i)=E\left(\sum_{j=1}^n \gamma(\lambda_i)\tilde \xi_i \bm v_i\right)=E\left(\bm V\left(\begin{smallmatrix}
\gamma(\lambda_{1,h}) & & \\
 & \ddots & \\
 & & \gamma(\lambda_{n,h})
\end{smallmatrix}\right)\tilde{\bm \xi}\right)
\end{equation*}
Therefore, given that $E$ is bijective, $\bm z=\bm C^{-1/2}\bm V\left(\begin{smallmatrix}
\gamma(\lambda_{1,h}) & & \\
 & \ddots & \\
 & & \gamma(\lambda_{n,h})
\end{smallmatrix}\right)\tilde{\bm \xi}$ where $\tilde{\bm \xi} \sim \mathcal{N}(\bm 0, \bm I)$, which proves the result.
\end{proof}

Theorem \ref{th:weights} provides an explicit expression for the the covariance matrix of the weights of the finite element approximation of a field defined by \eqref{gen_field_def}. This expression agrees in particular with the expression of the precision matrix of those same weights exposed in \citep{lindgren2011explicit} for the particular case of continuous Markovian fields on $\mathbb{R}^d$. Given the generality of domains $\mathcal{D}$ (convex bounded or Riemannian manifold), differential operators $L$ and functions $\gamma$, a wide class of fields are now open to study.

Note in particular that Riemannian manifolds provide a generic framework for the study of non-stationary fields on a manifold. Indeed, stationary fields on a Riemannian manifold $M$ equipped with a metric $u$ can be seen as non-stationary random fields on the manifold $M$ with locally-varying anisotropies defined by $u$. 

In the next section, an error bound between a generalized random field $Z$ defined by \eqref{gen_field_def} and its finite element approximation defined by \eqref{EF_def} is provided, using the same framework as in \citep{bolin2017numerical}.

\subsection{Error analysis of the finite element approximation}

First, we recall the notations used in this paper.\\
Let $(V_h)_{h\in]0,1]}$ be a family of finite element spaces indexed by a mesh size $h$ over a domain $\mathcal{D}\subset\mathbb{R}^d$. Let's denote $N_h=\text{dim}(V_h)$ be the number of basis functions associated with the triangulation of $\mathcal{D}$ with mesh size $h$. \\
Let $L$ denote a densely defined, self-adjoint, positive semi-definite linear differential operator of second order defined on a subset of $H=L^2(\mathcal{D})$, and let $L_h$ denote its discretization over $V_h$. Let $\lbrace \lambda_{j} \rbrace_{j\in\mathbb{N}}$ and $\lbrace \lambda_{j,h} \rbrace_{1\le j\le N_h}$ be the eigenvalues of $L$ and $L_h$, listed in non-decreasing order.
\\
Let $\gamma : \mathbb{R}_+ \rightarrow \mathbb{R}$.

The following assumptions are considered to derive an error bound between a Generalized random field $Z$ defined by \eqref{gen_field_def} and its finite element approximation defined by \eqref{EF_def}.

\begin{assumption}[Growth of the eigenvalues of $L$]
\label{assum:lambda}
There exists tree constants $\alpha >0$, $c_\lambda>0$ and $C_\lambda>0$ such that:
$$\forall j\in\mathbb{N}, \quad \lambda_j >0 \Rightarrow c_\lambda j^\alpha \le \lambda_j \le C_\lambda j^\alpha$$
\end{assumption}

\begin{assumption}[Derivable of $\gamma$]
\label{assum:hold}
$\gamma$ is derivable on $\mathbb{R}_+$, and there exist $C_{\text{Deriv}}>0$ and $a\ge 0$ such that:
$$ \forall x > 0, \quad \vert\gamma'(x)\vert \le \frac{C_{\text{Deriv}}}{x^a}$$
\end{assumption}

\begin{assumption}[Asymptotic behavior of $\gamma$]
\label{assum:gamma}
There exists a constant $\beta >0$ such that $\gamma$ satisfies $\vert\gamma(\lambda)\vert =\mathop{\mathcal{O}}\limits_{\lambda \rightarrow+\infty}\left( \lambda^{-\beta}\right)$, i.e. 
$$\exists C_{\gamma} > 0, \exists R_{\gamma}>0,  \quad \lambda \ge R_{\gamma} \Rightarrow \vert\gamma(\lambda)\vert \le C_{\gamma} \lambda^{-\beta}$$
\end{assumption}

\begin{assumption}[Dimension of the finite element space]
\label{assum:Nh}
There exists two constants $\tilde d >0$, $C_{\text{FES}}>0$  such that:
$$N_h=\text{dim}(V_h)= C_{\text{FES}}h^{-\tilde d}$$
\end{assumption}

\begin{assumption}[Mesh size]
\label{assum:h}
The mesh size $h$ shall satisfy:
$$h \le \left(\frac{1}{C_{\text{FES}}}\left\lceil\left(\frac{R_{\gamma}}{c_\lambda}\right)^{1/\alpha}\right\rceil\right)^{-1/\tilde d}$$
where $C_{\text{FES}}$, $R_\gamma$, $\alpha$ and $c_\lambda$ are the constants defined in Assumptions \ref{assum:Nh}, \ref{assum:gamma} and \ref{assum:lambda}.\\
Consequently, following Assumptions \ref{assum:Nh} and \ref{assum:lambda}, for all $j \ge N_h$, $\lambda_j \ge  R_\gamma$.
\end{assumption}

\begin{assumption}[Eigenvalues and eigenvectors of $L_h$]
\label{assum:Lh}
There exists constants $C_1, C_2 >0$, $h_0\in ]0,1[$ and exponents $r, s > 0$ and $q>1$ such that the eigenvalues $\lbrace \lambda_{j,h}\rbrace_{1\le j\le N_h}$ and the eigenvectors $\lbrace e_{j,h}\rbrace_{1\le j\le N_h}$ of the discretisation $L_h$ of the operator $L$ onto the finite element space associated with a triangulation of mesh size $h$ satisfy:
$$0 \le \lambda_{j,h} - \lambda_j \le C_1h^r\lambda_j^q$$
$$\Vert e_{j,h} - e_j\Vert_H^2 \le C_2 h^{2s}\lambda_j^q$$
for all $h\in ]0,h_0[$ and for all $j\in [\![1, N_h]\!]$
\end{assumption}

Following the notations of the previous sections, let $Z$ and $Z_h$ be the random fields defined by: 
\begin{equation}
Z=\gamma(L)\mathcal{W}=\sum\limits_{j\in\mathbb{N}}\gamma(\lambda_j)\tilde{\xi}_j e_j
\end{equation}
and
\begin{equation}
Z_h=\gamma(L_h)\mathcal{W}_h=\sum\limits_{j=1}^{N_h}\gamma(\lambda_{j,h})\tilde{\xi}_j e_{j,h}
\end{equation} 
The expected error defined by :
\begin{equation}
\Vert Z - Z_h \Vert_{L^2(\Omega; H)}=\sqrt{\e\left[\Vert Z - Z_h \Vert_{H}^2\right]}
\end{equation}
is bounded using the following result.

\begin{theorem}
If $V_h$, $\gamma$, $L$ and $L_h$ satisfy Assumptions (1-6), and if the growth of eigenvalues $\alpha$ defined in Assumption \ref{assum:lambda} satisfies:
$$\max\left(\frac{1}{2\beta}, \frac{1}{2a}\right) \le \alpha \le \min\left(\frac{2s}{dq}, \frac{r}{dq}\right)$$
where the constants $a, \beta, s, \tilde d, q, r$ are defined  in Assumptions (1-6), then, for $h$ sufficiently small (cf. Assumption \ref{assum:h}), the $L^2(\Omega, H)$ error between the generalized random field $Z$ defined by \eqref{gen_field_def} and its finite element approximation defined by \eqref{EF_def} is bounded by:
\medskip
\begin{equation}
\Vert Z_{N_h} - Z_h \Vert_{L^2(\Omega; H)} \le M h^{\min\left(s-dq\alpha/2, r-dq\alpha, \tilde d(\alpha\beta-1/2),  \tilde d(\alpha a-1/2) \right)}
\end{equation}
\medskip
where $M$ is a constant independent of $h$.
\label{th:err_fem}
\end{theorem}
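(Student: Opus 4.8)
The starting point is that $Z$ and $Z_h$ are built from the \emph{same} Gaussian coefficients $\{\tilde\xi_j\}$, so their difference is a single series with independent coefficients and its $L^2(\Omega;H)$ norm reduces to a deterministic spectral sum. Splitting the difference into the first $N_h$ modes and the tail,
\[
Z - Z_h = \sum_{j=1}^{N_h}\tilde\xi_j\bigl(\gamma(\lambda_j)e_j - \gamma(\lambda_{j,h})e_{j,h}\bigr) + \sum_{j>N_h}\tilde\xi_j\,\gamma(\lambda_j)\,e_j ,
\]
and using $\e[\tilde\xi_j\tilde\xi_k]=\delta_{jk}$ together with the bilinearity of $(.,.)_H$ (all cross terms vanish), I obtain the exact identity
\[
\e\bigl[\Vert Z - Z_h\Vert_H^2\bigr] = \sum_{j=1}^{N_h}\bigl\Vert \gamma(\lambda_j)e_j - \gamma(\lambda_{j,h})e_{j,h}\bigr\Vert_H^2 + \sum_{j>N_h}\gamma(\lambda_j)^2 .
\]
Call these $(\mathrm I)$ and $(\mathrm{II})$; the whole proof consists in bounding each and converting the index sums into powers of $h$ through the eigenvalue growth (Assumption \ref{assum:lambda}) and $N_h = C_{\text{FES}}h^{-\tilde d}$ (Assumption \ref{assum:Nh}). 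The recurring device is that, via $c_\lambda j^\alpha\le\lambda_j\le C_\lambda j^\alpha$, a partial sum $\sum_{j\le N_h}\lambda_j^{p}$ is comparable to $N_h^{\alpha p+1}$ when $\alpha p>-1$, while a tail $\sum_{j>N_h}\lambda_j^{p}$ is comparable to $N_h^{\alpha p+1}$ when $\alpha p<-1$, using whichever side of the growth bound matches the sign of $p$.

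For the tail $(\mathrm{II})$, Assumption \ref{assum:h} ensures $\lambda_j\ge R_\gamma$ for all $j\ge N_h$, so Assumption \ref{assum:gamma} gives $\gamma(\lambda_j)^2\le C_\gamma^2\lambda_j^{-2\beta}$; the device with $p=-2\beta$ (legitimate since the range of $\alpha$ forces $\alpha\beta>1/2$) yields $(\mathrm{II})\le C\,N_h^{1-2\alpha\beta}=C\,h^{\tilde d(2\alpha\beta-1)}$, i.e. the exponent $\tilde d(\alpha\beta-1/2)$ after taking the square root. In $(\mathrm I)$ I bound each summand by the triangle inequality and $(x+y)^2\le 2x^2+2y^2$, using $\Vert e_{j,h}\Vert_H=1$, to separate an eigen\emph{function} part $\gamma(\lambda_j)^2\Vert e_j-e_{j,h}\Vert_H^2$ and an eigen\emph{value} part $(\gamma(\lambda_j)-\gamma(\lambda_{j,h}))^2$. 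For the eigenfunction part I insert $\Vert e_j-e_{j,h}\Vert_H^2\le C_2h^{2s}\lambda_j^q$ (Assumption \ref{assum:Lh}) and the decay $\gamma(\lambda_j)^2\le C_\gamma^2\lambda_j^{-2\beta}$, so the summand is $\le C\,h^{2s}\lambda_j^{q-2\beta}$ and the device with $p=q-2\beta$ produces, after re-expressing $N_h$ through $h$, the composite exponent $(s-dq\alpha/2)+\tilde d(\alpha\beta-1/2)$. For the eigenvalue part the mean value theorem together with $|\gamma'(x)|\le C_{\text{Deriv}}x^{-a}$ (Assumption \ref{assum:hold}) and $0\le\lambda_{j,h}-\lambda_j\le C_1h^r\lambda_j^q$ (Assumption \ref{assum:Lh}) gives $|\gamma(\lambda_j)-\gamma(\lambda_{j,h})|\le C\,h^r\lambda_j^{q-a}$ — the monotonicity $\zeta\ge\lambda_j$ at the intermediate point lets me replace $\gamma'$ by $\lambda_j^{-a}$ — and the device with $p=2(q-a)$ produces the composite exponent $(r-dq\alpha)+\tilde d(\alpha a-1/2)$.

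Collecting, the squared error is bounded by a constant times $h$ to twice the minimum of the three resulting exponents $\tilde d(\alpha\beta-1/2)$, $(s-dq\alpha/2)+\tilde d(\alpha\beta-1/2)$ and $(r-dq\alpha)+\tilde d(\alpha a-1/2)$. Because the prescribed range $\max(1/(2\beta),1/(2a))\le\alpha\le\min(2s/(dq),r/(dq))$ makes each of $s-dq\alpha/2$, $r-dq\alpha$, $\tilde d(\alpha\beta-1/2)$, $\tilde d(\alpha a-1/2)$ nonnegative, every composite exponent is bounded below by each of its two summands, hence by the overall minimum of the four; since $h\le 1$, replacing the exponents by this minimum only enlarges the bound, and taking the square root gives exactly the claimed rate $M\,h^{\min(s-dq\alpha/2,\,r-dq\alpha,\,\tilde d(\alpha\beta-1/2),\,\tilde d(\alpha a-1/2))}$. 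I expect the main obstacle to be the spectral bookkeeping rather than any single inequality: one must use the two-sided growth $c_\lambda j^\alpha\le\lambda_j\le C_\lambda j^\alpha$ on the correct side for each sign of $p$, verify in each sum that the relevant regime ($\alpha p>-1$ for the heads, $\alpha p<-1$ for the tail) genuinely holds so that the comparison with $N_h^{\alpha p+1}$ is valid, and check that the finitely many low indices with $\lambda_j$ near $0$ — where Assumptions \ref{assum:hold} and \ref{assum:lambda} degenerate — contribute only an $h$-independent constant absorbed into $M$. Aligning these regimes with the conversion $N_h=C_{\text{FES}}h^{-\tilde d}$ is what turns the three spectral estimates into the single stated exponent, after which the algebra showing each composite exponent dominates the corresponding pair in the $\min$ is immediate.
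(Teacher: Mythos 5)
Your proposal is correct and follows essentially the same route as the paper: truncate at $N_h$ into a head and a tail, bound the tail via Assumptions \ref{assum:gamma}, \ref{assum:lambda}, \ref{assum:Nh}, split the head into an eigenfunction part (Assumption \ref{assum:Lh} plus the decay of $\gamma$, with the finitely many small-eigenvalue indices below $J_0$ handled separately) and an eigenvalue part (mean value theorem with Assumptions \ref{assum:hold} and \ref{assum:Lh}), convert the index sums into powers of $h$ through $N_h=C_{\text{FES}}h^{-\tilde d}$, and conclude with the same exponent bookkeeping. The only cosmetic difference is that you use the orthogonality $\e[\tilde\xi_j\tilde\xi_k]=\delta_{jk}$ to get an exact Pythagorean identity for the head/tail split, where the paper inserts the intermediate truncation $Z_{N_h}$ and applies the triangle inequality; the estimates and the final rate are identical.
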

The proof of this theorem is provided in Appendix \ref{proof:th_err_fem} and is an adaptation of proof of Theorem 2.10 of \citep{bolin2017numerical}, which provide a bound for the same approximation error for the particular case where $\gamma : x \mapsto 1/x^\beta$, $\beta \in ]0,1[$, and $L$ is a positive definite.

\begin{example} In the particular case where $L=-\Delta$, $L$ is a strongly elliptic differential operator of order $2$. Weyl's law \citep{weyl1911asymptotische} gives an exponent $\alpha$ for which Assumption \ref{assum:lambda} is satisfied:
$$\alpha=\frac{d}{2}$$
Moreover, if we assume that the finite element spaces are quasi-uniform and are composed of continuous piecewise polynomial functions of degree $p\ge 1$, then Assuptions \ref{assum:Nh} and \ref{assum:Lh} are satisfied for the exponents \citep{bolin2017numerical, strang1973analysis}:
$$r=2(p-1), \quad s=\min\lbrace p+1, 2(p-1)\rbrace, \quad q=\frac{p+1}{2}$$
\end{example}

\section{Finite element simulations}

Simulation of the finite element approximation \eqref{EF_def} of fields satisfying \eqref{gen_field_def} comes down to the simulation of the weights $\bm z=(z_1, \dots, z_n)^T$ of the decomposition onto the basis functions of the finite element space. These weights are Gaussian, with covariance matrix \eqref{cov_mat_EF}.

A straightforward method to generate admissible samples $\bm z$ would consist in computing:
$$\bm z = \bm C^{-1/2}\bm V \left(\begin{smallmatrix}
\gamma(\lambda_{1,h}) & & \\
 & \ddots & \\
 & & \gamma(\lambda_{n,h})
\end{smallmatrix}\right)\bm\epsilon$$
where $\bm\epsilon$ is a vector with $n$ independent standard Gaussian components. But this method supposes that the matrix $\bm S$ has been diagonalized and that its eigenvalues $\lbrace \lambda_{j,h}\rbrace$ and its eigenvectors $\bm V$ have been stored. Both operations being tremendously costly ($\mathcal{O}(n^3)$ for the diagonalization and $\mathcal{O}(n^2)$ for the storage), another method that doesn't involve them is highly preferable.

We propose to rather simulate the weights using the Chebyshev simulation algorithm presented in \citep{pereira2018efficient}. Indeed, the algorithm can provides vector samples with covariance matrix \eqref{cov_mat_EF} with a computational complexity that is linear with the number of non-zeros $\bm S$ (which is small as $\bm S$ is a sparse matrix) and an order of approximation which is fixed using a criterion that ensures that the statistical properties of the output are valid. Concerning the storage needs, only a matrix as sparse as $\bm S$ needs to be stored as the algorithm relies on matrix-vector multications.  This algorithm is reminded below.

\begin{figure}[h]
\begin{topbot}[innertopmargin=2ex,innerbottommargin=1ex]
\textbf{Algorithm}: Simulation of the weights using Chebyshev approximation \citep{pereira2018efficient}\\
\noindent\makebox[\linewidth]{\rule{\textwidth}{0.5pt}}\\
\textbf{Require}: An order of approximation $K\in\mathbb{N}$. A vector of $n$ independent standard Gaussian components $\bm\epsilon$.\\
\textbf{Output}: A vector $\bm z$ with covariance matrix (approximately equal to) \eqref{cov_mat_EF}.
\vspace{-1ex}
\begin{enumerate}
\item Find an interval $[a, b]$ containing all the eigenvalues of $\bm S$ (using for instance the Gershgorin cirle theorem).
\item Compute a polynomial approximation $\mathrm{P}$ of the function $\gamma$ over $[a, b]$, by truncating its (shifted) Chebyshev series at order $K$.\\
The coefficients of the development in Chebyshev series of $\gamma$ are computed by Fast Fourier Transform.
\item Compute iteratively the product $\bm u = \mathrm{P}(\bm S)\bm\epsilon$ using the recurrence relation satisfied by the Chebyshev polynomials.
\item The simulated field is given by: $\bm z = \bm C^{-1/2} \bm u$
\end{enumerate}
\end{topbot}
\end{figure}

\section{Conclusion}
In this work we provided an explicit expression for weights of the finite element approximation of a Generalized random field defined over of domain $\mathcal{D}$ consisting of a bounded domain of $\mathbb{R}^d$ or a $d$-dimensional smooth Riemannian manifold,  by $Z=\gamma(L)\mathcal{W}$ where $\gamma : \mathbb{R}_+ \rightarrow \mathbb{R}$, $L$ is a second-order self-adjoint positive definite differential operator, and $\mathcal{W}$ is a Gaussian white noise. An error bound on this approximation was derived and an algorithm for fast and efficient sampling of this field was exposed.


\nocite{*}
\bibliographystyle{apalike}
\bibliography{biblio.bib}

\begin{thebibliography}{}

\bibitem[Adcock, 2010]{adcock2010multivariate}
Adcock, B. (2010).
\newblock Multivariate modified fourier series and application to boundary
  value problems.
\newblock {\em Numerische Mathematik}, 115(4):511--552.

\bibitem[Bolin et~al., 2017]{bolin2017numerical}
Bolin, D., Kirchner, K., and Kov{\'a}cs, M. (2017).
\newblock Numerical solution of fractional elliptic stochastic pdes with
  spatial white noise.
\newblock {\em arXiv preprint arXiv:1705.06565}.

\bibitem[Carrizo~Vergara et~al., 2018]{vergara2018general}
Carrizo~Vergara, R., Allard, D., and Desassis, N. (2018).
\newblock A general framework for spde-based stationary random fields.
\newblock {\em arXiv preprint arXiv:1806.04999}.

\bibitem[Courant and Hilbert, 1966]{courant1966methods}
Courant, R. and Hilbert, D. (1966).
\newblock Methods of mathematical physics.
\newblock Technical report.

\bibitem[Fuglstad et~al., 2015]{fuglstad2015exploring}
Fuglstad, G.-A., Lindgren, F., Simpson, D., and Rue, H. (2015).
\newblock Exploring a new class of non-stationary spatial gaussian random
  fields with varying local anisotropy.
\newblock {\em Statistica Sinica}, pages 115--133.

\bibitem[Grebenkov and Nguyen, 2013]{grebenkov2013geometrical}
Grebenkov, D.~S. and Nguyen, B.-T. (2013).
\newblock Geometrical structure of laplacian eigenfunctions.
\newblock {\em SIAM Review}, 55(4):601--667.

\bibitem[Lang and Potthoff, 2011]{lang2011fast}
Lang, A. and Potthoff, J. (2011).
\newblock Fast simulation of gaussian random fields.
\newblock {\em Monte Carlo Methods and Applications}, 17(3):195--214.

\bibitem[Lindgren et~al., 2011]{lindgren2011explicit}
Lindgren, F., Rue, H., and Lindstr{\"o}m, J. (2011).
\newblock An explicit link between gaussian fields and gaussian markov random
  fields: the stochastic partial differential equation approach.
\newblock {\em Journal of the Royal Statistical Society: Series B (Statistical
  Methodology)}, 73(4):423--498.

\bibitem[Osborne, 2010]{OSBORNE2010115}
Osborne, A.~R. (2010).
\newblock 7 - multidimensional fourier series.
\newblock In Osborne, A.~R., editor, {\em Nonlinear Ocean Waves and the Inverse
  Scattering Transform}, volume~97 of {\em International Geophysics}, pages 115
  -- 145. Academic Press.

\bibitem[Pereira and Desassis, 2018]{pereira2018efficient}
Pereira, M. and Desassis, N. (2018).
\newblock Efficient simulation of gaussian markov random fields by chebyshev
  polynomial approximation.
\newblock {\em arXiv preprint arXiv:1805.07423}.

\bibitem[Strang and Fix, 1973]{strang1973analysis}
Strang, G. and Fix, G.~J. (1973).
\newblock {\em An analysis of the finite element method}, volume 212.
\newblock Prentice-hall Englewood Cliffs, NJ.

\bibitem[Weyl, 1911]{weyl1911asymptotische}
Weyl, H. (1911).
\newblock {\"U}ber die asymptotische verteilung der eigenwerte.
\newblock {\em Nachrichten von der Gesellschaft der Wissenschaften zu
  G{\"o}ttingen, Mathematisch-Physikalische Klasse}, 1911:110--117.

\end{thebibliography}
\addcontentsline{toc}{section}{References}

 \begin{appendices}

\section{Laplacian and Fourier transform}
\label{sec::four}

\subsection{Multivariate Fourier series and transform}
Let $g(\bm x)$ be a $2\pi$-periodic function of $\mathbb{R}^d,$ i.e. $g$ is $2\pi$-periodic with respect to each variable $x_1, \dots, x_d$ and suppose that $g\in L^2([-\pi,\pi]^d)$. Then $g$ can be represented as the limit on $L^2$ of its Fourier series $\mathcal{S}_f[g]$ defined by \citep{OSBORNE2010115}:
$$\mathcal{S}_F[g](\bm x)=\sum\limits_{\bm j\in\mathbb{Z}^d}c_{\bm j}(g)e^{i\bm j^T\bm x}$$
where the coefficients $c_{\bm j}(g)$ are given by:
$$c_{\bm j}(g)=\frac{1}{(2\pi)^d}\int_{[-\pi,\pi]^d}e^{-i\bm j^T\bm x}g(\bm x)d\bm x$$
The Fourier transform of $g$ is then defined from its Fourier series as a train of impulses (corresponding to the Fourier transform of each term of Fourier series): 
\begin{equation}
\mathcal{F}[g](\bm\omega)=(2\pi)^d\sum\limits_{\bm j\in\mathbb{Z}^d}c_{\bm j}(g)\delta_{\bm j}(\bm\omega)
\end{equation}
where $\delta_{\bm j}(\bm\omega)=\prod\limits_{j=1}^d\delta_{j_k}(\omega_k)$ and $\delta_\lambda(.)$ is the Dirac impulse at $\lambda$. 

\subsection{Proof}

Let's first consider the case $\mathcal{D}=[0,\pi]^d$ and denote $H=L^2(\mathcal{D})$. The eigenvalues and eigenfunctions of $-\Delta$ on $\mathcal{D}$ with Dirichlet boundary conditions are given for $\bm j\in\mathbb{N}^d$ by \citep{grebenkov2013geometrical}:
\begin{equation}
e_{\bm j}=\left(\frac{2}{\pi}\right)^{d}\prod\limits_{k=1}^d\sin(j_kx), \quad \lambda_{\bm j}= \sum\limits_{k=1}^d j_k^2
\label{eig_lab}
\end{equation}

\begin{lemma}
Let $f\in L^2([0,\pi]^d)$ and define $\tilde{f} : \mathbb{R}^d \rightarrow \mathbb{R}$ by:
\begin{equation}
\left\lbrace\begin{array}{ll}
\tilde{f}(\bm x)=f(\bm x), & \forall \bm x\in[0,\pi]^d \\
\tilde f(x_1,\dots, -x_k,\dots,x_d)=-\tilde f(x_1,\dots, x_k,\dots,x_d) & \forall \bm x\in\mathbb{R}^d, 1\le k\le d \\
\tilde f(\bm x+2\pi\bm n)=\tilde f(\bm x), & \forall \bm x\in\mathbb{R}^d, \bm n\in\mathbb{Z}^d
\end{array}\right.
\label{transfo_per}
\end{equation}
Then the coefficients $c_{\bm j}(\tilde{f})$ of the Fourier series of $\tilde f$ satisfy $\forall \bm j\in\mathbb{Z}^d$:
\begin{equation}
c_{\bm j}(\tilde{f})=\frac{1}{(2i)^d}\varepsilon(\bm j)(f, e_{\vert\bm j\vert})_H
\end{equation}
where $\varepsilon(\bm j)=\prod_{k=1}^d\textnormal{sign}(j_k)$, $\vert\bm j\vert:=(\vert j_1\vert, \dots, \vert j_1\vert)^T\in\mathbb{N}^d$ and $e_{\vert \bm j\vert}$ is an eigenfunction of $-\Delta$ on $[0,\pi]^d$ with Dirichlet boundary conditions, as defined in \eqref{eig_lab}. (Note : $\textnormal{sign} : x\in\mathbb{R} \mapsto 1$ if $x\ge 0$, $-1$ otherwise)\\
In particular, the Fourier series of $\tilde{f}$ (restricted to $[0,\pi]^d$) coincides (up to a normalization constant) with the development of $f$ in the eigenbasis of the Laplacian.
\end{lemma}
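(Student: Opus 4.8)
The plan is to start from the defining integral for the Fourier coefficient,
\[
c_{\bm j}(\tilde f)=\frac{1}{(2\pi)^d}\int_{[-\pi,\pi]^d}e^{-i\bm j^T\bm x}\tilde f(\bm x)\,d\bm x,
\]
and to exploit the fact that $\tilde f$ is odd with respect to each coordinate $x_k$ by construction \eqref{transfo_per}. The idea is to factor the kernel coordinate-wise as $e^{-i\bm j^T\bm x}=\prod_{k=1}^d\bigl(\cos(j_kx_k)-i\sin(j_kx_k)\bigr)$ and to expand this product into its $2^d$ terms, each a product of $d$ factors where the $k$-th factor is either $\cos(j_kx_k)$ or $-i\sin(j_kx_k)$.

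The key step is to show that every term carrying at least one cosine factor integrates to zero. Indeed, if a term contains the factor $\cos(j_kx_k)$ for some $k$, then by Fubini (applicable since $\tilde f\in L^2\subset L^1$ on the bounded domain) I would integrate first in $x_k$; since $\cos(j_kx_k)$ is even in $x_k$ while $\tilde f$ is odd in $x_k$, the inner integral $\int_{-\pi}^{\pi}\cos(j_kx_k)\tilde f(\bm x)\,dx_k$ vanishes for almost every value of the remaining variables, killing the whole term. Hence the only surviving contribution is the all-sine term $(-i)^d\prod_{k=1}^d\sin(j_kx_k)$, giving
\[
c_{\bm j}(\tilde f)=\frac{(-i)^d}{(2\pi)^d}\int_{[-\pi,\pi]^d}\Bigl(\prod_{k=1}^d\sin(j_kx_k)\Bigr)\tilde f(\bm x)\,d\bm x.
\]

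Next I would fold the integral back onto $[0,\pi]^d$: the integrand $\tilde f(\bm x)\prod_k\sin(j_kx_k)$ is even in each $x_k$ (a product of two functions odd in $x_k$), so the integral over $[-\pi,\pi]^d$ equals $2^d$ times the integral over $[0,\pi]^d$, on which $\tilde f=f$. Writing $\sin(j_kx_k)=\textnormal{sign}(j_k)\sin(\vert j_k\vert x_k)$ factors out $\varepsilon(\bm j)=\prod_k\textnormal{sign}(j_k)$, and the remaining integral $\int_{[0,\pi]^d}f(\bm x)\prod_k\sin(\vert j_k\vert x_k)\,d\bm x$ is, up to the normalization constant appearing in \eqref{eig_lab}, exactly $(f,e_{\vert\bm j\vert})_H$. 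Collecting the prefactor $(-i)^d2^d/(2\pi)^d$ together with the normalization constant of $e_{\vert\bm j\vert}$ and using the identity $1/i=-i$, so that $(-i)^d=(2i)^{-d}2^d$, yields the announced expression $c_{\bm j}(\tilde f)=\frac{1}{(2i)^d}\varepsilon(\bm j)(f,e_{\vert\bm j\vert})_H$.

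I expect the main obstacle to be the careful justification of the vanishing of all mixed cosine/sine terms, i.e. making the Fubini-plus-oddness argument rigorous at the $L^2$ level (the one-dimensional inner integral must be shown to vanish for almost every slice, which follows from the oddness of $\tilde f$ in each variable separately). A minor but worthwhile check is the degenerate case in which some $j_k=0$: then $\sin(j_kx_k)\equiv0$ makes the left-hand side vanish, while $e_{\vert\bm j\vert}$ contains the null factor $\sin(0\cdot x_k)$ and the right-hand side vanishes as well, so the identity holds trivially. The remaining work is purely the bookkeeping of the constants needed to confirm that the prefactor simplifies to $1/(2i)^d$.
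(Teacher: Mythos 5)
Your computation of the Fourier coefficients is correct and rests on the same idea as the paper's proof: the coordinate-wise oddness of $\tilde f$ reduces the exponential kernel to the all-sine term and lets you fold the integral onto $[0,\pi]^d$, where $\prod_{k}\sin(\vert j_k\vert x_k)=(\pi/2)^d e_{\vert\bm j\vert}$. The paper organizes this as an induction, peeling off one coordinate at a time (splitting $\int_{-\pi}^{\pi}$ into $\int_{-\pi}^{0}+\int_{0}^{\pi}$ and using oddness to produce a factor $-2i\sin(j_kx_k)$), whereas you expand $e^{-i\bm j^T\bm x}$ into its $2^d$ cosine/sine products and kill every term containing a cosine by parity; these are the same argument with different bookkeeping, and your constants check out, since $(-i)^d 2^d/(2\pi)^d\cdot(\pi/2)^d=1/(2i)^d$.

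However, your proposal stops short of the last assertion of the lemma: that the Fourier series of $\tilde f$, restricted to $[0,\pi]^d$, coincides up to a normalization constant with the expansion of $f$ in the Laplacian eigenbasis. This does not follow from the coefficient formula by mere inspection: one must regroup the terms of $\mathcal{S}_F[\tilde f]$ over the index sets $\lbrace\bm j\in\mathbb{Z}^d:\vert\bm j\vert=\bm k\rbrace$, $\bm k\in\mathbb{N}^d$, and use the Euler-formula identity
\begin{equation*}
\prod_{l=1}^d\sin(k_lx_l)=\frac{1}{(2i)^d}\sum_{\bm j\in\mathbb{Z}^d:\,\vert\bm j\vert=\bm k}\varepsilon(\bm j)\,e^{i\bm j^T\bm x}
\end{equation*}
to recognize that each group sums to a multiple of $e_{\bm k}(\bm x)$; this is exactly how the paper concludes, obtaining $f=(2/\pi)^d\,\mathcal{S}_F[\tilde f]$ on $[0,\pi]^d$. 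Adding this regrouping step (a few lines, starting from $f=\sum_{\bm k\in\mathbb{N}^d}(f,e_{\bm k})_H e_{\bm k}$) would make your proof complete.
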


\begin{proof}
\begin{align*}
c_{\bm j}(\tilde{f})=\frac{1}{(2\pi)^d}\int_{[-\pi,\pi]^d}e^{-i\bm j^T\bm x}\tilde{f}(\bm x)d\bm x
=\frac{1}{(2\pi)^d}\int_{[-\pi,\pi]^{d-1}}e^{-i\sum\limits_{k=1}^{d-1} j_kx_k}\int_{[-\pi,\pi]}e^{-ij_dx_d}\tilde{f}(\bm x)d\bm x
\end{align*}
And,
\begin{align*}
\int_{[-\pi,\pi]}e^{-ij_dx_d}\tilde{f}(\bm x)d x_d
&=\int_{[-\pi,0]}e^{-ij_dx_d}\tilde{f}(\bm x)d x_d+\int_{[0,\pi]}e^{-ij_dx_d}\tilde{f}(\bm x)d x_d
=\int_{[0,\pi]}(-e^{ij_dx_d}+e^{-ij_dx_d})\tilde{f}(\bm x)d x_d \\
&=-2i\int_{[0,\pi]}\sin(j_dx_d)\tilde{f}(\bm x)d x_d
\end{align*}
So,
\begin{align*}
c_{\bm j}(\tilde{f})
=\frac{-2i}{(2\pi)^d}\int_{[-\pi,\pi]^{d-1}}e^{-i\sum\limits_{k=1}^{d-1} j_kx_k}\int_{[0,\pi]}\sin(j_dx_d)\tilde{f}(\bm x)d\bm x
\end{align*}
By induction, the same process yields,
\begin{align*}
c_{\bm j}(\tilde{f})
&=\frac{(-2i)^d}{(2\pi)^d}\int_{[0,\pi]^{d}}\prod\limits_{k=1}^d\sin(j_kx_k)\tilde{f}(\bm x)d\bm x
=\frac{1}{(i\pi)^d}\int_{[0,\pi]^{d}}\prod\limits_{k=1}^d\sin(j_kx_k)f(\bm x)d\bm x \\
&=\frac{1}{(i\pi)^d}\int_{[0,\pi]^{d}}\prod\limits_{k=1}^d\sin(\textnormal{sign}(j_k)\vert j_k \vert x_k)f(\bm x)d\bm x
=\frac{1}{(i\pi)^d}\int_{[0,\pi]^{d}}\bm\varepsilon(\bm j)\prod\limits_{k=1}^d\sin(\vert j_k \vert x_k)f(\bm x)d\bm x\\
&=\frac{1}{(i\pi)^d}\bm\varepsilon(\bm j)\int_{[0,\pi]^{d}}\left(\frac{\pi}{2}\right)^d e_{\vert\bm j\vert}(\bm x)f(\bm x)d\bm x
=\frac{1}{(2i)^d}\bm\varepsilon(\bm j)\left(f, e_{\vert\bm j\vert}\right)_H
\end{align*}
Moreover, $f$ can be written $\forall \bm x\in\mathbb{R}^d$:
\begin{align*}
f(\bm x)=\sum\limits_{\bm k\in \mathbb{N}^d} (f, e_{\bm k})_He_{\bm k}(\bm x)
\end{align*}
Using Euler's formula, it is quite straightforward to show that $\forall \bm k\in\mathbb{N}^d$:
$$\prod\limits_{l=1}^d\sin(k_lx_l)=\frac{1}{(2i)^d}\sum\limits_{\bm j\in \mathbb{Z}^d : \vert\bm j\vert = \bm k} \bm\varepsilon(\bm j)e^{i\bm j^T\bm x}$$
Therefore,
\begin{align*}
f(\bm x)&=\frac{1}{(i\pi)^d}\sum\limits_{\bm k\in \mathbb{N}^d} (f, e_{\bm k})_H\sum\limits_{\bm j\in \mathbb{Z}^d : \vert\bm j\vert = \bm k} \bm\varepsilon(\bm j)e^{i\bm j^T\bm x}
=\frac{1}{(i\pi)^d} \sum\limits_{\bm j\in \mathbb{Z}^d}\bm\varepsilon(\bm j)(f, e_{\vert\bm j\vert})_H e^{i\bm j^T\bm x} \\
&=\frac{(2i)^d}{(i\pi)^d} \sum\limits_{\bm j\in \mathbb{Z}^d}c_{\bm j} e^{i\bm j^T\bm x}=\left(\frac{2}{\pi}\right)^d \mathcal{S}_F[\tilde{f}](\bm x)
\end{align*}

\end{proof}

It is therefore possible to define the Fourier transform of a function of $f\in L^2([0,\pi]^d)$ as the Fourier transform of its associated $2\pi$-periodic function (of $\mathbb{R}^d$) $\tilde{f}$ defined as in \eqref{transfo_per}. Using this convention, the Fourier transform $\mathcal{F}[f]$ of $f\in L^2([0,\pi]^d)$ is given by:
\begin{equation}
\mathcal{F}[f](\bm\omega)=(2\pi)^d\sum\limits_{\bm j\in\mathbb{Z}^d}c_{\bm j}(\tilde{f})\delta_{\bm j}(\bm\omega)=(-i\pi)^d\sum\limits_{\bm j\in\mathbb{Z}^d}\varepsilon(\bm j)(f, e_{\vert\bm j\vert})_H\delta_{\bm j}(\bm\omega)
\end{equation}

\begin{lemma}
Let $\phi\in L^2([0,\pi]^d)$ such that $\gamma(-\Delta)\phi\in L^2([0,\pi]^d)$. Then,  $\gamma(-\Delta)\phi$ is equal to the restriction to $[0,\pi]^d$ of $\mathcal{F}^{-1}\left[\bm w \mapsto \gamma(\Vert \bm w\Vert^2)\mathcal{F}[\phi](\bm\omega)\right]$, where $\mathcal{F}$ is the Fourier transform operator.
\end{lemma}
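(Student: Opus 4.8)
The plan is to push the desired identity through the Fourier-transform formula
$\mathcal{F}[\phi](\bm\omega)=(-i\pi)^d\sum_{\bm j\in\mathbb{Z}^d}\varepsilon(\bm j)(\phi, e_{\vert\bm j\vert})_H\,\delta_{\bm j}(\bm\omega)$
established just above the statement. The single structural fact that makes everything work is that the frequencies carrying the impulses are precisely the lattice points $\bm j\in\mathbb{Z}^d$, and that for each such point $\Vert\bm j\Vert^2=\sum_{k=1}^d j_k^2=\lambda_{\vert\bm j\vert}$ is exactly the eigenvalue of $-\Delta$ attached to the eigenfunction $e_{\vert\bm j\vert}$. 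Hence the Fourier multiplier $\gamma(\Vert\bm\omega\Vert^2)$ acts on each impulse simply by reweighting it with the scalar $\gamma(\lambda_{\vert\bm j\vert})$.

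Concretely, I would first multiply the formula above by $\gamma(\Vert\bm\omega\Vert^2)$ and use the sifting property of the Dirac impulse, $\gamma(\Vert\bm\omega\Vert^2)\delta_{\bm j}(\bm\omega)=\gamma(\Vert\bm j\Vert^2)\delta_{\bm j}(\bm\omega)=\gamma(\lambda_{\vert\bm j\vert})\delta_{\bm j}(\bm\omega)$, which is legitimate as soon as $\gamma$ is continuous at the eigenvalues (true in particular for the motivating cases of Section \ref{motiv}). The terms with a vanishing component $j_k=0$ contribute nothing, since then $e_{\vert\bm j\vert}=0$. This produces the reweighted train $(-i\pi)^d\sum_{\bm j\in\mathbb{Z}^d}\varepsilon(\bm j)\gamma(\lambda_{\vert\bm j\vert})(\phi, e_{\vert\bm j\vert})_H\,\delta_{\bm j}(\bm\omega)$. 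Comparing this with the very same Fourier-transform formula read in the other direction, I would recognize it as $\mathcal{F}[\psi]$, where $\psi$ is the function whose eigenbasis coefficients are $(\psi, e_{\bm k})_H=\gamma(\lambda_{\bm k})(\phi, e_{\bm k})_H$ for all $\bm k\in\mathbb{N}^d$. Applying $\mathcal{F}^{-1}$ and invoking injectivity of the transform then gives, on $[0,\pi]^d$, the equality $\mathcal{F}^{-1}[\bm\omega\mapsto\gamma(\Vert\bm\omega\Vert^2)\mathcal{F}[\phi]]=\psi=\sum_{\bm k\in\mathbb{N}^d}\gamma(\lambda_{\bm k})(\phi, e_{\bm k})_H e_{\bm k}$, and the right-hand side is exactly $\gamma(-\Delta)\phi$ by its spectral definition.

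The step I expect to be the genuine obstacle is not the symbolic manipulation but its justification: one must ensure that the reweighted impulse train is the Fourier transform of an honest $L^2([0,\pi]^d)$ function rather than of a mere tempered distribution, so that the recognition of it as $\mathcal{F}[\psi]$ and the subsequent inversion are meaningful. This is precisely where the hypothesis $\gamma(-\Delta)\phi\in L^2([0,\pi]^d)$ is used: it is equivalent to $\sum_{\bm k}\gamma(\lambda_{\bm k})^2(\phi, e_{\bm k})_H^2<\infty$, i.e. to $\phi\in H^\gamma$, which guarantees that the series defining $\psi$ converges in $H$ and that $\psi$ admits the claimed Fourier expansion. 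Finally, I would note that the restriction to the cube $[0,\pi]^d$ is harmless: the general cube (and, after the usual change of variables, a general rectangular domain) reduces to this normalized case by rescaling each coordinate, which only rescales the eigenvalues and frequencies consistently on both sides of the identity.
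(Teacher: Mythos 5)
Your proposal is correct and takes essentially the same route as the paper's proof: both hinge on the sifting identity $\gamma(\Vert\bm\omega\Vert^2)\delta_{\bm j}(\bm\omega)=\gamma(\Vert\bm j\Vert^2)\delta_{\bm j}(\bm\omega)=\gamma(\lambda_{\vert\bm j\vert})\delta_{\bm j}(\bm\omega)$ applied to the impulse-train representation of $\mathcal{F}[\phi]$, the only difference being organizational --- the paper computes $\mathcal{F}\left[\gamma(-\Delta)\phi\right]$ and $\gamma(\Vert\bm\omega\Vert^2)\mathcal{F}[\phi]$ separately and observes they coincide, whereas you rewrite one into the other and invoke injectivity of $\mathcal{F}$. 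Your explicit use of the hypothesis $\gamma(-\Delta)\phi\in L^2([0,\pi]^d)$ to justify that the reweighted train is the transform of a genuine $L^2$ function is a point the paper leaves implicit, and is a welcome clarification rather than a deviation.
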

\begin{proof}
On one hand, by definition of $\gamma(-\Delta)$, 
$$\gamma(-\Delta)\phi=\sum\limits_{\bm j\in\mathbb{N}^d}g(\lambda_{\bm j})(\phi, e_{\bm j})_H e_{\bm j}$$
where $\lambda_{\bm j}$ and $e_{\bm j}$ are defined in \eqref{eig_lab}.
Then $\forall\bm\omega\in\mathbb{R}^d$,
$$\mathcal{F}\left[\gamma(-\Delta)\phi\right](\bm\omega)
=(-i\pi)^d\sum\limits_{\bm j\in\mathbb{Z}^d}\varepsilon(\bm j)\gamma(\lambda_{\vert\bm j\vert})(\phi, e_{\vert\bm j\vert})_H\delta_{\bm j}(\bm\omega)$$
On the other hand, notice that $\forall\bm\omega\in\mathbb{R}^d$:
\begin{align*}
\gamma(\Vert\bm\omega\Vert^2)\mathcal{F}[\phi](\bm\omega)
&=(-i\pi)^d\sum\limits_{\bm j\in\mathbb{Z}^d}\varepsilon(\bm j)(\phi, e_{\vert\bm j\vert})_H\gamma(\Vert\bm\omega\Vert^2)\delta_{\bm j}(\bm\omega)\\
&=(-i\pi)^d\sum\limits_{\bm j\in\mathbb{Z}^d}\varepsilon(\bm j)(\phi, e_{\vert\bm j\vert})_H\gamma(\Vert\bm j\Vert^2)\delta_{\bm j}(\bm\omega)
=(-i\pi)^d\sum\limits_{\bm j\in\mathbb{Z}^d}\varepsilon(\bm j)(\phi, e_{\vert\bm j\vert})_H\gamma(\lambda_{\vert\bm j\vert})\delta_{\bm j}(\bm\omega)\\
&=\mathcal{F}\left[\gamma(-\Delta)\phi\right](\bm\omega)
\end{align*}
which proves the result.
\end{proof}

This last result can be generalized to more general bounded domains $\mathcal{D}$ or to Riemannian manifolds by defining the Fourier transform on such domains from the decomposition onto the orthonormal basis of eigenfunctions of the Laplacian \citep{adcock2010multivariate}. 

More precisely, the Fourier transform is seen as an application from $L^2(\mathcal{D})$ to $\ell^2(\mathbb{N})$, that associates to each $f\in L^2(\mathcal{D})$ the sequence $\lbrace(f, e_j)_H\rbrace_{j\ge 1}$ of coefficients of the decomposition of $f$ onto the eigenbasis of the Laplacian $\lbrace e_j\rbrace_{j\ge 1}$. The frequency domain $\ell^2(\mathbb{N})$ is a discrete one, indexed by the eigenvalues of the Laplacian: functions on this domain are therefore square-summable sequencies representing the evaluation of a function over the set of admissible frequencies, i.e. the eigenvalues of the Laplacian. From this definition, it is straightforward to check that $\gamma(-\Delta)$ coincides with the operator $\mathcal{F}^{-1}\left[\gamma\mathcal{F}[.]\right]$.


\section{Proof of Theorem \ref{th:err_fem}}
\label{proof:th_err_fem}
Let $Z_{N_h}$ be the random field defined by as the truncation of $Z$ after $N_h$ terms:
\begin{equation}
Z_{N_h}=\sum\limits_{j=1}^{N_h}\gamma(\lambda_j)\tilde{\xi}_j e_j
\end{equation}

Then, from the triangular inequality: 
\begin{equation}
\Vert Z - Z_h \Vert_{L^2(\Omega; H)}\le \Vert Z - Z_{N_h} \Vert_{L^2(\Omega; H)} + \Vert Z_{N_h} - Z_h \Vert_{L^2(\Omega; H)}
\end{equation}

\subsection{Truncation error}
\begin{align*}
\Vert Z - Z_{N_h} \Vert_{L^2(\Omega; H)}^2 =\e\left[\Vert \sum\limits_{j>N_h}\gamma(\lambda_j)\tilde{\xi}_j e_j \Vert_{H}^2\right]=\e\left[\sum\limits_{j>N_h}\gamma(\lambda_j)^2\tilde{\xi}_j^2 \right] =\sum\limits_{j>N_h}\gamma(\lambda_j)^2 
\end{align*}

Then from Assumptions \ref{assum:h} and \ref{assum:gamma}, we have:
\begin{align*}
\Vert Z - Z_{N_h} \Vert_{L^2(\Omega; H)}^2 
&\le  C_\gamma^2 \sum\limits_{j>N_h}  \lambda_j^{-2\beta} 
\end{align*}
And using Assumption \ref{assum:lambda}:
\begin{align*}
\Vert Z - Z_{N_h} \Vert_{L^2(\Omega; H)}^2 
&\le  C_\gamma^2 c_\lambda^{-2\beta}\sum\limits_{j>N_h}  j^{-2\alpha\beta} 
\le \frac{C_\gamma^2 c_\lambda^{-2\beta}}{(2\alpha\beta-1)} \times \frac{1}{N_h^{2\alpha\beta-1}}
\end{align*}

Finally, Assumption \ref{assum:Nh} yields:
\begin{equation}
\Vert Z - Z_{N_h} \Vert_{L^2(\Omega; H)}^2 
\le \frac{C_\gamma^2 c_\lambda^{-2\beta}}{(2\alpha\beta-1)C_{\text{FES}}^{2\alpha\beta-1}} \times h^{\tilde d(2\alpha\beta-1)}
\end{equation}

\subsection{Finite element discretization error}

\begin{align*}
 \Vert Z_{N_h} - Z_h \Vert_{L^2(\Omega; H)} &=\left\Vert \sum\limits_{j=1}^{N_h}\gamma(\lambda_j)\tilde{\xi}_j e_j - \sum\limits_{j=1}^{N_h}\gamma(\lambda_{j,h})\tilde{\xi}_j e_{j,h} \right\Vert_{L^2(\Omega; H)} \\
 & \le \left\Vert \sum\limits_{j=1}^{N_h}\gamma(\lambda_j)\tilde{\xi}_j e_j - \sum\limits_{j=1}^{N_h}\gamma(\lambda_j)\tilde{\xi}_j e_{j,h} \right\Vert_{L^2(\Omega; H)}
+\left\Vert \sum\limits_{j=1}^{N_h}\gamma(\lambda_j)\tilde{\xi}_j e_{j,h}  - \sum\limits_{j=1}^{N_h}\gamma(\lambda_{j,h})\tilde{\xi}_j e_{j,h} \right\Vert_{L^2(\Omega; H)}
\end{align*}
$$=(\text{I})+(\text{II})$$
On one hand,
\begin{align*}
(\text{I})^2 &= \left\Vert \sum\limits_{j=1}^{N_h}\gamma(\lambda_j)\tilde{\xi}_j (e_j-e_{j,h}) \right\Vert_{L^2(\Omega; H)}^2=\e\left[ \left(\sum\limits_{j=1}^{N_h}\gamma(\lambda_j)\tilde{\xi}_j (e_j-e_{j,h}),  \sum\limits_{j=1}^{N_h}\gamma(\lambda_j)\tilde{\xi}_j (e_j-e_{j,h})\right)_H \right] \\
&=  \sum\limits_{j=1}^{N_h}\sum\limits_{k=1}^{N_h}\gamma(\lambda_j)\gamma(\lambda_k)\e\left[\tilde{\xi}_j\tilde{\xi}_k\right]\left( (e_j-e_{j,h}),   (e_k-e_{k,h})\right)_H =\sum\limits_{j=1}^{N_h}\gamma(\lambda_j)^2\Vert e_j-e_{j,h}\Vert_H^2
\end{align*}

So, following Assumption \ref{assum:Lh}, 
\begin{align*}
(\text{I})^2 &\le C_2 \times h^{2s}\sum\limits_{j=1}^{N_h}\gamma(\lambda_j)^2\lambda_j^q 
\end{align*}

Let $J_0$ be the integer defined by $J_0=\left\lceil\left(\frac{R_{\gamma}}{c_\lambda}\right)^{1/\alpha}\right\rceil$. According to Assumptions \ref{assum:h} and \ref{assum:Nh}, $N_h \ge J_0$. Therefore, we write:

\begin{align*}
(\text{I})^2 &\le C_2S_0 \times h^{2s} +C_2 \times h^{2s}\sum\limits_{j=J_0}^{N_h}\gamma(\lambda_j)^2\lambda_j^q
\end{align*}
where $S_0$ the constant defined by $S_0=\sum_{j=1}^{J_0-1}\gamma(\lambda_j)^2\lambda_j^q$. 

Remark then that according to Assumptions \ref{assum:lambda} and \ref{assum:gamma}, $j \le J_0 \Rightarrow \lambda_j \ge R_\gamma$ and therefore,
\begin{align*}
(\text{I})^2 &\le C_2S_0 \times h^{2s} +C_2C_\gamma^2 \times h^{2s}\sum\limits_{j=J_0}^{N_h}\lambda_j^{-2\beta}\lambda_j^q \\
&\le C_2S_0 \times h^{2s} +C_2C_\gamma^2C_\lambda^{q-2\beta} \times h^{2s}\sum\limits_{j=J_0}^{N_h}j^{\alpha(q-2\beta)}\\
&\le C_2S_0 \times h^{2s} +C_2C_\gamma^2C_\lambda^{q-2\beta} \times h^{2s}N_h^{1+\alpha(q-2\beta)}
\end{align*}

So, following Assumption \ref{assum:Nh}, 
\begin{align*}
(\text{I})^2 \le C_2S_0 \times h^{2s} +C_2C_\gamma^2C_\lambda^{q-2\beta}C_{\text{FES}}^{1+\alpha(q-2\beta)} \times h^{2s-\tilde d\alpha q + \tilde d(2\alpha\beta-1)}
\end{align*}

On the other hand,
\begin{align*}
(\text{II})^2 &= \left\Vert \sum\limits_{j=1}^{N_h}(\gamma(\lambda_j)-\gamma(\lambda_{j,h}))\tilde{\xi}_j e_{j,h} \right\Vert_{L^2(\Omega; H)}
=\e\left[\Vert \sum\limits_{j=1}^{N_h}(\gamma(\lambda_j)-\gamma(\lambda_{j,h}))\tilde{\xi}_j e_{j,h} \Vert_{H}^2\right]\\
&=\e\left[\sum\limits_{j=1}^{N_h}(\gamma(\lambda_j)-\gamma(\lambda_{j,h}))^2\tilde{\xi}_j^2 \right] 
=\sum\limits_{j=1}^{N_h}(\gamma(\lambda_j)-\gamma(\lambda_{j,h}))^2
\end{align*}

In particular, using the mean value theorem, for all $1\le j\le N_h$ there exists $l_j\in [\lambda_j, \lambda_{j,h}]$ such that: 
$$\gamma(\lambda_j)-\gamma(\lambda_{j,h})=\gamma'(l_j)(\lambda_{j,h}-\lambda_j)$$
So, using Assumption \ref{assum:hold},
$$\vert\gamma(\lambda_j)-\gamma(\lambda_{j,h})\vert=\vert\gamma'(l_j)\vert\vert\lambda_{j,h}-\lambda_j\vert \le \frac{C_{\text{Deriv}}}{l_j^a}\vert\lambda_{j,h}-\lambda_j\vert\le \frac{C_{\text{Deriv}}}{\lambda_j^a}\vert\lambda_{j,h}-\lambda_j\vert $$
And using Assumptions \ref{assum:lambda} and \ref{assum:Lh},
$$\vert\gamma(\lambda_j)-\gamma(\lambda_{j,h})\vert\le \frac{C_{\text{Deriv}}}{(c_\lambda j^\alpha)^a} C_1h^r(C_\lambda j^\alpha)^q$$
Therefore,
\begin{align*}
(\text{II})^2 &\le \left(C_{\text{Deriv}}c_\lambda^{-a}C_1 C_\lambda^q\right)^2 \times h^{2r}\sum\limits_{j=1}^{N_h} j^{2\alpha(q-a)} 
\le \left(C_{\text{Deriv}}c_\lambda^{-a}C_1 C_\lambda^q\right)^2 \times h^{2r}N_h^{2\alpha(q-a)+1} 
\end{align*}
And using Assumption \ref{assum:Nh}:
\begin{align*}
(\text{II})^2 &\le \left(C_{\text{Deriv}}c_\lambda^{-a}C_1 C_\lambda^q\right)^2C_{\text{FES}}^{2\alpha(q-a)+1} \times h^{2r-2d\alpha(q-a)-\tilde d} 
\end{align*}

\subsection{Total error}

Combining the terms $(\text{I})$ and $(\text{II})$ gives:
\begin{equation*}
\Vert Z_{N_h} - Z_h \Vert_{L^2(\Omega; H)} \le \sqrt{M_1 \times h^{2s} +M_2 h^{2s-\tilde d\alpha q + \tilde d(2\alpha\beta-1)}}+M_3 h^{(r-dq\alpha)+\tilde d(a\alpha-1/2)}
\end{equation*}
where $M_1, M_2, M_3$ are constants independent of $h$.
Using the fact that $h<1$, this last bound can actually be simplified by noticing that all the terms $h^u$ can be bounded by the one with the smallest exponent:
\begin{equation*}
\Vert Z_{N_h} - Z_h \Vert_{L^2(\Omega; H)} \le M h^{\min\left(s, s-dq\alpha/2, \tilde d(\alpha\beta-1/2), r-dq\alpha, \tilde d(\alpha a-1/2) \right)}
\end{equation*}
where $M$ is a constant independent of $h$.

\end{appendices}

\end{document}